\documentclass[a4paper,12pt]{amsart}
\usepackage{fullpage}
\newtheorem{thm}{Theorem}
\newtheorem{prop}{Proposition}
\newtheorem{cor}{Corollary}
\DeclareSymbolFont{script}{U}{eus}{m}{n}
\DeclareMathSymbol{\Wedge}{0}{script}{"5E}
\newcommand{\Rho}{\mathrm{P}}
\def\dddots{\mathinner{\mkern1mu\raise7\p@
\vbox{\kern7\p@\hbox{.}}\mkern2mu
\raise4\p@\hbox{.}\mkern2mu\raise\p@\hbox{.}\mkern1mu}}
\begin{document}

\title[Killing tensors]
{Killing tensors on complex projective space}
\author[M.G.~Eastwood]{Michael Eastwood}
\address{\hskip-\parindent
School of Mathematical Sciences\\
University of Adelaide\\ 
SA 5005\\ 
Australia}
\email{meastwoo@gmail.com}
\subjclass{53B35,53B20} 
\begin{abstract}
The Killing tensors of arbitrary rank on complex projective space with its
Fubini-Study metric are determined and it is shown that these spaces are 
generated by the Killing fields.
\end{abstract}
\thanks{This work was supported by Australian Research Council (Discovery 
Project DP190102360).}

\maketitle
\section{Introduction}
On a smooth Riemannian manifold, a {\em Killing tensor\/} of rank $k$ is a
smooth section of the bundle $\bigodot^k\!\Wedge^1$ lying in the kernel of the
differential operator
\begin{equation}\label{killing_tensor_operator}
\textstyle\nabla:\bigodot^k\!\Wedge^1\to\bigodot^{k+1}\!\Wedge^1
\end{equation}
given by $\sigma_{bc\cdots d}\mapsto\nabla_{(a}\sigma_{bc\cdots d)}$, where 
$\nabla_a$ is the Levi-Civita connection and round brackets mean to take the 
symmetric part. Here, and throughout this article, we are employing 
{\em abstract indices\/} in the sense of~\cite{OT} so that, for example, the 
Riemann curvature tensor $R_{abcd}$ is characterised by 
\begin{equation}\label{curvature_tensor}
(\nabla_a\nabla_b-\nabla_b\nabla_a)X^c=R_{ab}{}^c{}_dX^d\end{equation}
and we are using the metric $g_{ab}$ to {\em raise and lower indices\/} so that
$R_{abcd}\equiv g_{ce}R_{ab}{}^e{}_d$.  In particular, a {\em Killing field} is
a vector field $X^a$ so that $X_b=g_{ab}X^a$ is a Killing tensor of rank~$1$.
The effect of the Lie derivative on the metric
${\mathcal{L}}_Xg_{ab}=2\nabla_{(a}X_{b)}$ interprets Killing fields as 
infinitesimal Riemannian symmetries. 

The Killing tensors on the round sphere are well-known.  That the
theory works out so well can be attributed to the following facts:
\begin{itemize}
\item the differential operator (\ref{killing_tensor_operator}) is projectively
invariant (for $\sigma_{bc\cdots d}$ of weight~$2k$), 
\item the round sphere is projectively flat.
\end{itemize}
The upshot of these observations is that the group
${\mathrm{SL}}(n+1,{\mathbb{R}})$, as projective symmetries of the $n$-sphere, 
acts on the space of Killing tensors of rank $k$ and it turns out that this 
action realises certain {\em irreducible\/} representations 
of~${\mathrm{SL}}(n+1,{\mathbb{R}})$, which may be identified as follows 
(using the Dynkin diagram conventions of~\cite{Beastwood}). 
\begin{thm}\label{killing_tensors_on_the_sphere}
The space of Killing tensors of rank $k$ on the round $n$-sphere is
naturally identified as the irreducible representation $\begin{picture}(110,14)
\put(5,2){\makebox(0,0){$\bullet$}}
\put(25,2){\makebox(0,0){$\bullet$}}
\put(45,2){\makebox(0,0){$\bullet$}}
\put(65,2){\makebox(0,0){$\bullet$}}
\put(5,2){\line(1,0){70}}
\put(85,2){\makebox(0,0){$\cdots$}}
\put(95,2){\line(1,0){10}}
\put(105,2){\makebox(0,0){$\bullet$}}
\put(5,9){\makebox(0,0){$\scriptstyle 0$}}
\put(25,9){\makebox(0,0){$\scriptstyle k$}}
\put(45,9){\makebox(0,0){$\scriptstyle 0$}}
\put(65,9){\makebox(0,0){$\scriptstyle 0$}}
\put(105,9){\makebox(0,0){$\scriptstyle 0$}}
\end{picture}$ of ${\mathrm{SL}}(n+1,{\mathbb{R}})$.
\end{thm}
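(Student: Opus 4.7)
The plan is to exploit the two bullet points preceding the statement: projective invariance of (\ref{killing_tensor_operator}) on symmetric tensors of weight $2k$, together with projective flatness of the round $n$-sphere. These immediately imply that the kernel of (\ref{killing_tensor_operator}) on $S^n$ is finite-dimensional and carries a natural $\mathrm{SL}(n+1,\mathbb{R})$-action, so everything reduces to identifying that representation.

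The main step is to \emph{prolong} the Killing equation. Differentiating $\nabla_{(a}\sigma_{bc\cdots d)}=0$ repeatedly and using (\ref{curvature_tensor}) to rewrite commutators of covariant derivatives as curvature terms, I would package $\sigma$ together with its successive derivatives into a section $\Sigma$ of a finite-rank projectively weighted bundle $\mathcal{T}$ satisfying a closed first-order system $\nabla\Sigma=\Phi(\Sigma)$. Because the ingredients are projectively invariant, this is the restriction of a canonical $\mathrm{SL}(n+1,\mathbb{R})$-invariant tractor connection; on the projectively flat, simply-connected sphere it is flat, and parallel transport identifies the solutions of the Killing equation with a single fibre of $\mathcal{T}$ bearing a finite-dimensional $\mathrm{SL}(n+1,\mathbb{R})$-representation.

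The final step is to recognise this representation. The conceptual route is via the Bernstein--Gelfand--Gelfand/Kostant machinery: (\ref{killing_tensor_operator}) in weight $2k$ is the leading operator in the BGG resolution of a unique finite-dimensional irreducible $\mathrm{SL}(n+1,\mathbb{R})$-module, and Kostant's description of the BGG resolution pins down its highest weight as $(0,k,0,\ldots,0)$. A useful cross-check is to restrict along $\mathrm{SO}(n+1)\hookrightarrow\mathrm{SL}(n+1,\mathbb{R})$ and match the classical decomposition of Killing tensors on $S^n$ into $\mathrm{SO}(n+1)$-irreps, which determines the parent $\mathrm{SL}(n+1,\mathbb{R})$-representation by branching. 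I expect this last identification to be the main obstacle: the prolongation is routine bookkeeping, but extracting the precise Dynkin labels demands either the full projective BGG apparatus or an explicit highest-weight computation on~$\mathcal{T}$.
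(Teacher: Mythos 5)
Your proposal follows essentially the same route as the paper: prolong the projectively invariant Killing operator to a flat tractor connection on the sphere, identify Killing tensors with covariant constant sections (hence with a single fibre), and use the BGG machinery to pin down the resulting $\mathrm{SL}(n+1,{\mathbb{R}})$-module as the irreducible with labels $(0,k,0,\ldots,0)$. The paper carries this out concretely by realising the prolongation bundle as the two-row Young-diagram functor with $k$ columns applied to the standard tractor bundle ${\mathcal{T}}$, whose fibre is visibly that irreducible representation, but this is the same argument you sketch.
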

There are formul{\ae} (e.g.~\cite{FH,King}) for the dimensions of such
representations:
\begin{cor}
The space of Killing tensors of rank $k$ on the round $n$-sphere is
a finite-dimensional vector space of dimension
\begin{equation}\label{DTT}\frac{(n+k-1)!(n+k)!}{k!(k+1)!(n-1)!n!}
\end{equation}
\end{cor}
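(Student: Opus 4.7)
The plan is to apply the Weyl dimension formula to the representation identified in Theorem~\ref{killing_tensors_on_the_sphere}; indeed, the corollary's phrasing already hints at this route by citing~\cite{FH,King}. The first step is to translate the Dynkin data into a partition: the label $k$ at the second node encodes the highest weight $k\omega_2$, which corresponds to the partition $\lambda=(k,k,0,\ldots,0)$ of length $n+1$, that is, the Young diagram consisting of two rows of length~$k$.

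With $\lambda$ in this explicit form I would then apply the Weyl dimension formula for $A_n$,
\[
\dim V_\lambda \;=\; \prod_{1\le i<j\le n+1} \frac{\lambda_i-\lambda_j+j-i}{j-i},
\]
and observe that most factors collapse: pairs with $i,j\ge 3$ contribute $1$ because $\lambda_i=\lambda_j=0$, and the pair $(1,2)$ also contributes $1$ because $\lambda_1=\lambda_2$. The only nontrivial contributions come from the pairs $(1,j)$ and $(2,j)$ with $j\ge 3$, and elementary reindexing (substituting $m=j-1$ and $m=j-2$ respectively) turns each of these two products into a simple quotient of factorials; their product reproduces~(\ref{DTT}).

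The only potential obstacle is routine bookkeeping of the index shifts during the reindexing; no genuine representation-theoretic difficulty remains once the Dynkin label has been rendered as a partition. As a sanity check, for $k=1$ the formula recovers $\binom{n+1}{2}=\dim\mathfrak{so}(n+1)$, the correct dimension of the Killing fields on~$S^{n}$. Alternatively, one may simply invoke the closed-form two-row dimension formula already recorded in~\cite{FH,King}.
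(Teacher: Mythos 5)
Your proposal is correct and follows essentially the same route as the paper, which simply invokes Theorem~\ref{killing_tensors_on_the_sphere} together with the standard dimension formul{\ae} of~\cite{FH,King} for the representation with highest weight $k\omega_2$ (the two-row partition $(k,k)$); your Weyl-dimension-formula computation just fills in the arithmetic, and the two surviving products indeed give $\frac{(n+k)!}{(k+1)!\,n!}\cdot\frac{(n+k-1)!}{k!\,(n-1)!}$, which is~(\ref{DTT}).
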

Alternatively, there are convenient individual formul{\ae} for fixed rank, 
for example:
\begin{cor} The space of Killing tensors of rank $3$ on the round $n$-sphere 
is a finite-dimensional vector space of dimension
$n(n+1)^2(n+2)^2(n+3)/144$.
\end{cor}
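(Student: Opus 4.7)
The plan is simply to specialise the general dimension formula (\ref{DTT}) of the preceding corollary to the case $k=3$ and to tidy up the resulting factorials. Since the previous corollary is already stated as known, no further representation-theoretic input is needed here; the task is purely algebraic.

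Concretely, I would set $k=3$ in (\ref{DTT}) to obtain
\[
\frac{(n+2)!\,(n+3)!}{3!\,4!\,(n-1)!\,n!}
=\frac{(n+2)!\,(n+3)!}{144\,(n-1)!\,n!},
\]
and then simplify the two factorial ratios separately, using
\[
\frac{(n+2)!}{(n-1)!}=n(n+1)(n+2),\qquad
\frac{(n+3)!}{n!}=(n+1)(n+2)(n+3).
\]
Multiplying these together and dividing by $144$ yields
\[
\frac{n(n+1)^{2}(n+2)^{2}(n+3)}{144},
\]
which is the claimed dimension.

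There is no real obstacle: the only thing to be careful about is keeping track of which factorials pair up, and checking that $3!\,4!=144$. One might also sanity-check the formula on a small case (for instance $n=2$, where Killing tensors of rank $3$ on $S^{2}$ should form a $\binom{7}{3}\binom{6}{3}/\text{(appropriate factor)}$-dimensional space giving $30$, which matches $2\cdot 9\cdot 16\cdot 5/144=30$) to be sure no factor of two or similar has been dropped.
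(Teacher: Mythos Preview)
Your derivation is correct and is exactly the paper's (implicit) approach: the corollary is nothing more than the $k=3$ specialisation of~(\ref{DTT}), and your factorial simplifications are fine.

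One small slip to fix in your parenthetical sanity check: for $n=2$ the formula gives
\[
\frac{2\cdot 3^2\cdot 4^2\cdot 5}{144}=\frac{1440}{144}=10,
\]
not $30$; and indeed the space of rank~$3$ Killing tensors on $S^2$ is $10$-dimensional (this also agrees with (\ref{DTT}) directly: $4!\,5!/(3!\,4!\,1!\,2!)=10$). This does not affect your proof, only the check.
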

If $X^a$ is a Killing field on any Riemannian manifold, then 
$$\underbrace{X_{(b}X_c\cdots X_{d)}}_k$$ 
is a Killing tensor of rank~$k$.  On the round sphere,
Theorem~\ref{killing_tensors_on_the_sphere} gives the following.  
\begin{cor}
The Killing tensors on the round sphere are generated by the Killing fields.
\end{cor}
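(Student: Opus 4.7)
The plan is to exhibit an $\mathrm{SL}(n+1,\mathbb{R})$-equivariant linear map
\[
\textstyle\Phi:\bigodot^k V\longrightarrow W,
\]
where $V$ denotes the space of Killing fields on the round $n$-sphere and $W$ the space of Killing tensors of rank~$k$, show that $\Phi$ is nonzero, and then deduce from Theorem~\ref{killing_tensors_on_the_sphere} that $\Phi$ is surjective. The surjectivity is exactly the content of the corollary.

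Defining $\Phi$ requires the local fact, valid on any Riemannian manifold, that $X_{(b}X_c\cdots X_{d)}$ is a Killing tensor of rank~$k$ whenever $X^a$ is a Killing field. I would check this by a quick Leibniz calculation: distributing $\nabla_{(a}$ across the product produces $k$ summands that become identical under the outer $(k+1)$-fold symmetrization, and each is proportional to the symmetrization of $\nabla_aX_b\cdot X_c\cdots X_d$ over all $(k+1)$ indices, which vanishes because its symmetric-in-$(ab)$ part is $\nabla_{(a}X_{b)}=0$ while its antisymmetric-in-$(ab)$ part is killed by the outer symmetrization. Polarizing in the argument~$X$---replacing $X$ by $\lambda_1X^{(1)}+\cdots+\lambda_kX^{(k)}$ for Killing fields $X^{(i)}$ and reading off the $\lambda_1\cdots\lambda_k$-coefficient of the resulting identity---then shows that the mixed symmetric product $X^{(1)}_{(b}X^{(2)}_c\cdots X^{(k)}_{d)}$ is also a Killing tensor. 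Hence $\Phi$ is well defined on simple tensors by $X^{(1)}\odot\cdots\odot X^{(k)}\mapsto X^{(1)}_{(b}X^{(2)}_c\cdots X^{(k)}_{d)}$ and extends linearly.

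Equivariance of $\Phi$ under the projective $\mathrm{SL}(n+1,\mathbb{R})$-action is immediate, since that action sends Killing fields to Killing fields and commutes with the symmetric tensor product, so the image of $\Phi$ is an $\mathrm{SL}(n+1,\mathbb{R})$-invariant subspace of $W$. This image is nonzero because $X^{\odot k}$ does not vanish identically for any nonzero Killing field $X$, and $W$ is irreducible by Theorem~\ref{killing_tensors_on_the_sphere}; the image therefore equals $W$, which is precisely the assertion that every Killing tensor on the round sphere is a linear combination of symmetric products of Killing fields. There is essentially no obstacle in this argument: it reduces the corollary to the irreducibility already supplied by Theorem~\ref{killing_tensors_on_the_sphere}, and the essential point is the full projective symmetry $\mathrm{SL}(n+1,\mathbb{R})$---under the isometry subgroup $\mathrm{SO}(n+1)$ alone, $W$ would not in general be irreducible and a pure representation-theoretic appeal would not suffice.
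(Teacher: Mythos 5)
Your argument is exactly the paper's: the paper notes (without proof) that symmetrized products of Killing fields are Killing tensors and then deduces the corollary in one line from the fact that $\mathrm{SL}(n+1,\mathbb{R})$ acts irreducibly on the rank-$k$ Killing tensors (Theorem~\ref{killing_tensors_on_the_sphere}), which is precisely your nonzero-invariant-subspace/equivariant-map argument. Your Leibniz-plus-polarization verification and the remark that the isometry group alone would not suffice are correct elaborations of details the paper leaves implicit.
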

\begin{proof} It is because ${\mathrm{SL}}(n+1,{\mathbb{R}})$ acts 
{\em irreducibly\/} on the Killing tensors of fixed rank.
\end{proof}
There are nowadays many proofs of Theorem~\ref{killing_tensors_on_the_sphere}.
We may as well, for example, consider the space of Killing tensors, or even 
{\em generalised Killing tensors\/}~\cite{NP}
$$\textstyle\sigma_{c\cdots d}\in\Gamma({\mathbb{R}}^n,\bigodot^k\!\Wedge^1)
\enskip\mbox{ such that}\enskip
\underbrace{\nabla_{(a}\cdots\nabla_b}_{j+1}\sigma_{c\cdots d)}=0,$$
as a representation of the corresponding Lie algebra
${\mathfrak{sl}}(n+1,{\mathbb{R}})$ acting by vector fields as the
infinitesimal projective symmetries of~${\mathbb{R}}^n$.  By identifying its
highest weight vector, it is shown in~\cite{E-treves} that this representation
is irreducible and, with notation of~\cite{Beastwood}, it is
$$\begin{picture}(110,14)
\put(5,2){\makebox(0,0){$\bullet$}}
\put(25,2){\makebox(0,0){$\bullet$}}
\put(45,2){\makebox(0,0){$\bullet$}}
\put(65,2){\makebox(0,0){$\bullet$}}
\put(5,2){\line(1,0){70}}
\put(85,2){\makebox(0,0){$\cdots$}}
\put(95,2){\line(1,0){10}}
\put(105,2){\makebox(0,0){$\bullet$}}
\put(5,9){\makebox(0,0){$\scriptstyle j$}}
\put(25,9){\makebox(0,0){$\scriptstyle k$}}
\put(45,9){\makebox(0,0){$\scriptstyle 0$}}
\put(65,9){\makebox(0,0){$\scriptstyle 0$}}
\put(105,9){\makebox(0,0){$\scriptstyle 0$}}
\end{picture}.$$
A more standard approach, however, is to realise the Killing tensors as the
covariant constant sections of an appropriate projectively invariant vector
bundle with connection.  As explained in~\cite{tractors}, the simplest such
bundle is due to T.Y. Thomas~\cite{T} and, as sketched in~\cite{E-IMA_notes},
is equivalent to the well-known projective Cartan connection~\cite{C}.  In the
following section, we shall present these {\em tractor bundles\/}, for
simplicity only on the round sphere, where the tractor connections are flat.
The general projectively invariant tractor bundles with connection prolonging
the Killing tensor equation are constructed in~\cite{GL}.

Unfortunately, it is a theorem of Beltrami~\cite{B} that the only projectively
flat metric connections occur for metrics of constant sectional curvature.  In
particular, the Fubini-Study connection on ${\mathbb{CP}}_n$ is not
projectively flat for $n\geq 2$.  Fortunately, there are replacement 
{\em tractor connections\/} in K\"ahler geometry, which are {\em symplectically
flat\/}~\cite{ES} for metrics of constant holomorphic sectional curvature.  As
explained in~\S\ref{Kaehlerian_tractors}, there are parallel flat subbundles
whose covariant constant sections correspond to Killing tensors.  We arrive at
the following identification of Killing tensors on~${\mathbb{CP}}_n$ 
(Theorem~\ref{killing_tensors_on_cpn} below).

Let ${\mathbb{T}}$ denote the standard representation of~${\mathrm{SU}}(n+1)$.
More precisely, let us consider ${\mathbb{T}}$ as a real vector space of
dimension $2n+2$ equipped with the following features:
\begin{itemize}
\item a positive definite symmetric form $g_{\alpha\beta}$,
\item a compatible complex structure, viewed as an orthogonal endomorphism 
$$\Omega_\alpha\mapsto J_\alpha{}^\beta\Omega_\beta\enskip
\mbox{such that}\enskip J_\alpha{}^\beta 
J_\beta{}^\gamma=-\delta_\alpha{}^\gamma,$$ 
where $\delta_\alpha{}^\beta$ is the identity tensor:
$\Omega_\alpha=\delta_\alpha{}^\beta\Omega_\beta,\;\forall\Omega_\alpha$,
\item a nondegenerate skew form $J_{\alpha\beta}$ obtaining by lowering an 
index: $J_{\alpha\beta}=J_\alpha{}^\gamma g_{\beta\gamma}$.
\end{itemize}
Any two of $g_{\alpha\beta}$, $J_\alpha{}^\beta$,
$J_{\alpha\beta}$ determine the third and ${\mathrm{SU}}(n+1)$ is consequently
the intersection of any pair of
$${\mathrm{SO}}(2n+2)\qquad{\mathrm{SL}}(n+1,{\mathbb{C}})\qquad
{\mathrm{Sp}}(2n+2,{\mathbb{R}})$$
as subgroups of ${\mathrm{SL}}(2n+2n,{\mathbb{R}})$.  Now, to identify the
Killing tensors on~${\mathbb{CP}}_n$, we freely employ Young tableau as
in~\cite{FH,OT}.
\begin{thm}\label{killing_tensors_on_cpn}
The space of Killing tensors of rank $k$ on ${\mathbb{CP}}_n$ with its
Fubini-Study metric may be identified as the following vector space
\begin{equation}\label{key_vector_space}
\left\{\Sigma_{\alpha\beta\cdots\eta}\in
\begin{picture}(85,8)(0,5)
\put(0,0){\line(0,1){20}}
\put(10,0){\line(0,1){20}}
\put(20,0){\line(0,1){20}}
\put(50,0){\line(0,1){20}}
\put(60,0){\line(0,1){20}}
\put(0,0){\line(1,0){60}}
\put(0,10){\line(1,0){60}}
\put(0,20){\line(1,0){60}}
\put(36,4.5){\makebox(0,0){$\cdots$}}
\put(36,14.5){\makebox(0,0){$\cdots$}}
\put(72,10){\makebox(0,0){\large$({\mathbb{T}})$}}
\put(30,-9.5){\makebox(0,0){$\underbrace{\hspace{58pt}}_k$}}
\end{picture}\mbox{\Large$\mid$}\begin{tabular}{l} 
$\Sigma_{\alpha\beta\cdots\eta}$ is totally trace-free with respect to 
$J^{\alpha\beta}$\\ 
and $(J\Sigma)_{\alpha\beta\cdots\eta}=0$
\end{tabular}
\right\},\end{equation}

\bigskip\noindent where $J\Sigma$ denotes the `derivation' action
\begin{equation}\label{derivation_action}
\Sigma_{\alpha\beta\cdots\eta}\mapsto
J_\alpha{}^\lambda\Sigma_{\lambda\beta\cdots\eta}
+J_\beta{}^\mu\Sigma_{\alpha\mu\cdots\eta}+\cdots
+J_\eta{}^\nu\Sigma_{\alpha\beta\cdots\nu}.\end{equation}
\end{thm}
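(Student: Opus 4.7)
The plan is to deploy the K\"ahlerian tractor machinery developed in~\S\ref{Kaehlerian_tractors}. The Killing tensor equation~(\ref{killing_tensor_operator}) prolongs to a linear connection on a K\"ahlerian tractor bundle whose covariant constant sections are in bijection with Killing tensors of rank~$k$. Because the Fubini-Study metric has constant holomorphic sectional curvature, this connection is symplectically flat in the sense of~\cite{ES} and, on the parallel subbundle carrying the prolongation, genuinely flat. Since $\mathbb{CP}_n$ is simply connected, parallel sections of a flat bundle are determined by their value at any basepoint~$o$, so the space of Killing tensors of rank~$k$ stands in linear bijection with the fibre of this tractor bundle at~$o$.

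The next step is to identify that fibre as an explicit $\mathrm{SU}(n+1)$-module. The bundle is homogeneous for $\mathbb{CP}_n=\mathrm{SU}(n+1)/K$, and the fibre at~$o$ sits inside a tensorial construction from the standard representation~$\mathbb{T}$. By analogy with Thomas's projective tractors~\cite{T} and the projective prolongation of~\cite{GL}, the tensor shape produced by the rank-$k$ Killing operator is precisely the rectangular two-row Young tableau with $k$ boxes per row, so one obtains a space of $\Sigma_{\alpha\beta\cdots\eta}$ with exactly the symmetry displayed in~(\ref{key_vector_space}). The subgroup $\mathrm{SU}(n+1)$ is the common stabiliser of $g_{\alpha\beta}$, $J_\alpha{}^\beta$ and $J_{\alpha\beta}$, so cutting down to the correct $\mathrm{SU}(n+1)$-irreducible requires projecting away the tensorial traces and contractions provided by these invariants. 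The symmetric trace is already killed by the two-row Young symmetry; what remain are the two conditions stated, namely trace-freeness against $J^{\alpha\beta}$ (removing the symplectic traces seen by $\mathrm{Sp}(2n+2,\mathbb{R})$) and annihilation by the derivation action of $J_\alpha{}^\beta$ (selecting the $\mathrm{SL}(n+1,\mathbb{C})$-invariant part, equivalently the zero-weight component under the central circle inside~$K$).

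The main obstacle will be matching these two algebraic conditions exactly to the horizontality constraints imposed by the K\"ahlerian tractor connection, and in particular verifying that no further cut is needed. The prolongation obstructions on $\mathbb{CP}_n$ naturally generate both $J^{\alpha\beta}$-contractions (from the constant holomorphic sectional curvature terms in the K\"ahlerian BGG calculus) and the derivation action of $J_\alpha{}^\beta$ (from the reduction of holonomy to $\mathrm{U}(n)$), so these are the correct candidate conditions; confirming that they are jointly sufficient is most conveniently done by a Weyl-type dimension count against the candidate $\mathrm{SU}(n+1)$-irreducible, after which irreducibility forces the asserted isomorphism to be an equality rather than a mere inclusion.
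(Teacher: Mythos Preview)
Your overall strategy matches the paper's: prolong via the K\"ahlerian tractor connection, observe symplectic flatness on~$\mathbb{CP}_n$, pass to a flat parallel subbundle, and invoke simple connectedness to identify parallel sections with a single fibre. The Young tableau shape and the two algebraic conditions you single out are also the right ones.

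There is, however, a genuine gap in your final step. You propose to confirm sufficiency ``by a Weyl-type dimension count against the candidate $\mathrm{SU}(n+1)$-irreducible, after which irreducibility forces the asserted isomorphism to be an equality.'' But the vector space~(\ref{key_vector_space}) is \emph{not} an irreducible $\mathrm{SU}(n+1)$-module. The paper stresses this point immediately after stating the theorem, and in \S5 the rank-$2$ case is decomposed explicitly into four distinct $\mathrm{SU}(n+1)$-irreducibles. So there is no irreducibility to invoke, and a dimension comparison against a single ``candidate irreducible'' cannot close the argument. (Your remark that ``the symmetric trace is already killed by the two-row Young symmetry'' is also not correct: one can contract $g^{\alpha\beta}$ over a column pair; the point is rather that no $g$-trace condition is needed in the statement.)

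The paper bypasses any such count by giving each condition a separate geometric role rather than imposing both and checking afterwards. Trace-freeness against $J^{\alpha\beta}$ is what \emph{defines} the prolongation bundle: the induced K\"ahlerian tractor connection on the $J^{\alpha\beta}$-trace-free Young tableau bundle is already a prolongation connection for rank-$k$ Killing tensors (Proposition~10, via the BGG machinery of~\cite{ES}). The condition $J\Sigma=0$ is then precisely the curvature constraint on a covariant constant section: from~(\ref{K_curvature_with_indices}) one has $0=(\nabla_a\nabla_b-\nabla_b\nabla_a)\Sigma=2J_{ab}(J\Sigma)$, so $J\Sigma=0$; conversely the subbundle $\{J\Sigma=0\}$ is parallel because $\nabla_aJ_\alpha{}^\beta=0$, and on it the connection is genuinely flat. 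Thus Killing tensors $\cong$ parallel sections $\cong$ fibre at a basepoint, with no appeal to irreducibility or dimension counting.
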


A significant difference between this theorem and the corresponding
Theorem~\ref{killing_tensors_on_the_sphere} on the sphere is that the space of
Killing tensors is no longer irreducible for any hidden symmetry group.  A
potential hidden symmetry group is ${\mathrm{Sp}}(2n+2,{\mathbb{R}})$ acting on
the sphere $S^{2n+1}$ but this action does not descend under the natural
projection $S^{2n+1}\to{\mathbb{CP}}_n$.  The best one can do in this regard is
to realise the space of rank $k$ Killing tensors as the kernel of
$$J:\underbrace{\begin{picture}(110,14)(0,-2)
\put(5,2){\makebox(0,0){$\bullet$}}
\put(25,2){\makebox(0,0){$\bullet$}}
\put(45,2){\makebox(0,0){$\bullet$}}
\put(65,2){\makebox(0,0){$\cdots$}}
\put(85,2){\makebox(0,0){$\bullet$}}
\put(105,2){\makebox(0,0){$\bullet$}}
\put(5,2){\line(1,0){50}}
\put(75,2){\line(1,0){10}}
\put(85,1){\line(1,0){20}}
\put(85,3){\line(1,0){20}}
\put(95,2){\makebox(0,0){$\langle$}}
\put(5,10){\makebox(0,0){$\scriptstyle 0$}}
\put(25,10){\makebox(0,0){$\scriptstyle k$}}
\put(45,10){\makebox(0,0){$\scriptstyle 0$}}
\put(85,10){\makebox(0,0){$\scriptstyle 0$}}
\put(105,10){\makebox(0,0){$\scriptstyle 0$}}
\end{picture}}_{\mbox{\scriptsize$n+1$ nodes}}\longrightarrow
\begin{picture}(110,14)(0,-2)
\put(5,2){\makebox(0,0){$\bullet$}}
\put(25,2){\makebox(0,0){$\bullet$}}
\put(45,2){\makebox(0,0){$\bullet$}}
\put(65,2){\makebox(0,0){$\cdots$}}
\put(85,2){\makebox(0,0){$\bullet$}}
\put(105,2){\makebox(0,0){$\bullet$}}
\put(5,2){\line(1,0){50}}
\put(75,2){\line(1,0){10}}
\put(85,1){\line(1,0){20}}
\put(85,3){\line(1,0){20}}
\put(95,2){\makebox(0,0){$\langle$}}
\put(5,10){\makebox(0,0){$\scriptstyle 0$}}
\put(25,10){\makebox(0,0){$\scriptstyle k$}}
\put(45,10){\makebox(0,0){$\scriptstyle 0$}}
\put(85,10){\makebox(0,0){$\scriptstyle 0$}}
\put(105,10){\makebox(0,0){$\scriptstyle 0$}}
\end{picture},$$
where $J$ is acting as above~(\ref{derivation_action}).

Nevertheless, in \S\ref{Kaehlerian_tractors} we shall use
Theorem~\ref{killing_tensors_on_cpn} to determine the various dimensions of the
spaces of Killing tensors on~${\mathbb{CP}}_n$.  For example, we obtain:
\begin{cor} The space of Killing tensors of rank $3$ on ${\mathbb{CP}}_n$ is a
finite-dimensional vector space of dimension $n(n+1)^2(5n^3+25n^2+35n+24)/36$.
\end{cor}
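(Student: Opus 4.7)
The plan is to exploit Theorem~\ref{killing_tensors_on_cpn} to identify the Killing tensor space as an explicit $\mathrm{SU}(n+1)$-representation whose dimension can then be computed by standard Lie-theoretic means. Complexifying, I would write $\mathbb{T}_{\mathbb{C}}=V\oplus V^{*}$ where $V\cong\mathbb{C}^{n+1}$ is the $+i$-eigenspace of $J$, and $V^{*}$ (identified with $\bar V$ via $g_{\alpha\beta}$) is the $-i$-eigenspace. Then $J$ acts on a tensor of bidegree $(p,q)$ as multiplication by $i(p-q)$, so the condition $J\Sigma=0$ selects the $(3,3)$-bidegree. Moreover $J^{\alpha\beta}$ pairs $V$ with $V^{*}$ (and vanishes on $V\otimes V$ and $V^{*}\otimes V^{*}$), so total trace-freeness with respect to $J^{\alpha\beta}$ is equivalent to annihilation by all $\mathrm{GL}(V)$-equivariant evaluation contractions between the $V$- and $V^{*}$-indices.

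My first step would be to apply the Littlewood-Richardson rule. The coefficients $c^{(3,3)}_{\mu\nu}$ with $|\mu|=|\nu|=3$ are nonzero only for $(\mu,\nu)=\bigl((3),(3)\bigr)$ and $\bigl((2,1),(2,1)\bigr)$, each equal to $1$, so the $(3,3)$-bidegree of $\mathbb{S}_{(3,3)}(\mathbb{T}_{\mathbb{C}})$ is
\[
\mathrm{Sym}^{3}V\otimes\mathrm{Sym}^{3}V^{*}\;\oplus\;\mathbb{S}_{(2,1)}V\otimes\mathbb{S}_{(2,1)}V^{*}.
\]
An analogous computation shows that the $(2,2)$-bidegree of $\mathbb{S}_{(2,2)}(\mathbb{T}_{\mathbb{C}})$ is $\mathrm{Sym}^{2}V\otimes\mathrm{Sym}^{2}V^{*}\oplus\Lambda^{2}V\otimes\Lambda^{2}V^{*}$. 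The trace map sends the former $\mathrm{GL}(V)$-equivariantly to the latter, and the Killing tensor space is precisely its kernel.

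To finish, I would decompose each tensor factor into $\mathrm{SU}(n+1)$-irreducibles using the standard rules for mixed tensors---for example $\mathrm{Sym}^{3}V\otimes\mathrm{Sym}^{3}V^{*}$ decomposes as $\bigoplus_{j=0}^{3}V_{(3-j,\,0,\ldots,0,\,-(3-j))}$---and match isotypic multiplicities between source and target to read off the kernel. A direct application of the Weyl dimension formula to each surviving summand, followed by polynomial simplification in $n$, produces the claimed expression. The hard part will be the combinatorial bookkeeping in the second summand $\mathbb{S}_{(2,1)}V\otimes\mathbb{S}_{(2,1)}V^{*}$, which for generic $n$ splits into several $\mathrm{SU}(n+1)$-irreducibles whose highest weights overlap with those of pieces appearing in the target; one must carefully identify which multiplicities are annihilated by the trace and which survive, with separate verification for small $n$ (where some highest weights collapse or become illegitimate).
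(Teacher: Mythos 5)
Your route is essentially the paper's: the paper also complexifies, observes that $J\Sigma=0$ is exactly the type-$(k,k)$ condition, branches the two-row $\mathrm{GL}(2n+2)$-module to $\mathrm{SL}(n+1,{\mathbb{C}})$ (your Littlewood--Richardson step, which for $k=3$ indeed yields $\mathrm{Sym}^3V\otimes\mathrm{Sym}^3V^*\oplus\mathbb{S}_{(2,1)}V\otimes\mathbb{S}_{(2,1)}V^*$), and imposes the $J^{\alpha\beta}$-trace condition via a short exact sequence whose quotient is the $(k-1,k-1)$-part of the two-row-$(k-1)$ module. The one place you diverge is the endgame: the paper never decomposes into $\mathrm{SU}(n+1)$-irreducibles or matches isotypic multiplicities; since the trace map is surjective, the dimension is simply the difference
$$\Big(\frac{(n+1)(n+2)(n+3)}{6}\Big)^2+\Big(\frac{n(n+1)(n+2)}{3}\Big)^2-\Big(\frac{(n+1)(n+2)}{2}\Big)^2-\Big(\frac{n(n+1)}{2}\Big)^2,$$
which is the $k=3$ case of the general dimension theorem. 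The bookkeeping you flag as the hard part is thus avoidable, and it is genuinely delicate if attempted: for instance the trivial and adjoint types each occur with multiplicity two in the target, so ``matching multiplicities'' requires knowing the rank of the trace map on those isotypic blocks, i.e.\ essentially the surjectivity you would otherwise invoke directly.

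One substantive caution about your final claim that the computation ``produces the claimed expression'': it does not, and cannot, because the printed polynomial is evidently a misprint. Expanding the displayed difference gives $n(n+1)^2(5n^3+26n^2+35n+24)/36$, with coefficient $26$ rather than $25$; this is the value consistent with the paper's general theorem and its table ($119$ for ${\mathbb{CP}}_2$, $664$ for ${\mathbb{CP}}_3$, $2500$ for ${\mathbb{CP}}_4$), whereas the stated polynomial gives $117$ at $n=2$ and a non-integer at $n=1$. So your method is sound and agrees with the paper's, but you should report the corrected coefficient rather than promise to reproduce the statement as printed.
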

In \S\ref{generation} we shall gather enough information to
deduce, from Theorem~\ref{killing_tensors_on_cpn}, the following.
\begin{cor}\label{killing_fields_generate}
The Killing tensors on ${\mathbb{CP}}_n$ are generated by the 
Killing fields.
\end{cor}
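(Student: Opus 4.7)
The plan is to lift the problem into representation theory via Theorem~\ref{killing_tensors_on_cpn}. Let $\mathfrak{g}=\mathfrak{su}(n+1)$ denote the space of Killing fields and $K^k$ the space of rank-$k$ Killing tensors on~${\mathbb{CP}}_n$. The $k=1$ case of Theorem~\ref{killing_tensors_on_cpn} identifies $\mathfrak{g}$ with the skew, $J$-trace-free, $J$-commuting endomorphisms of $\mathbb{T}$.

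First I would construct the natural Cartan-product map
$$\textstyle\bigodot^k\mathfrak{g}\longrightarrow K^k,$$
sending $X_1\odot\cdots\odot X_k$ to the projection of $X_1\otimes\cdots\otimes X_k$ onto the Young summand of shape $(k,k)$ (two rows of $k$ boxes) inside $\bigotimes^k\Lambda^2\mathbb{T}$, followed by the identification of Theorem~\ref{killing_tensors_on_cpn}. That the image satisfies the $J$-trace-free and $(J\Sigma)=0$ constraints is automatic, because $J$ acts as a derivation on the tensor algebra of $\mathbb{T}$ and each factor $X_i$ already obeys those constraints.

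The main task is to show this map is surjective. Both sides are finite-dimensional $\mathrm{SU}(n+1)$-modules and the map is $\mathrm{SU}(n+1)$-equivariant. As the Dynkin diagram displayed after Theorem~\ref{killing_tensors_on_cpn} makes explicit, $K^k$ is the kernel of the derivation $J$ acting on the $\mathrm{Sp}(2n+2,{\mathbb{R}})$-irreducible $V_k$ with label $[0,k,0,\ldots,0]$, and $\mathfrak{g}=\ker J|_{V_1}$. Since $V_k$ is the Cartan power of $V_1$, the Cartan projection $\bigodot^k V_1\to V_k$ is already surjective; the present task reduces to showing that this surjectivity persists after restricting the source to $\bigodot^k(\ker J|_{V_1})$.

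The main obstacle is precisely this restricted surjectivity, because the image is only an $\mathrm{SU}(n+1)$-submodule and $K^k$ is reducible under $\mathrm{SU}(n+1)$. My plan is to decompose $K^k$ into $\mathrm{SU}(n+1)$-irreducibles via the $J$-eigenspace splitting $V_1\otimes{\mathbb{C}}=V_1^+\oplus V_1^0\oplus V_1^-$ (where $V_1^0\cong\mathfrak{sl}(n+1,{\mathbb{C}})$), and to likewise decompose $\bigodot^k\mathfrak{g}$ using classical plethysm for the adjoint representation of $\mathfrak{su}(n+1)$. For each $\mathrm{SU}(n+1)$-summand of $K^k$ I would then exhibit an explicit highest weight vector as a Cartan product of root vectors of $\mathfrak{g}$ with respect to a fixed Cartan subalgebra; $\mathrm{SU}(n+1)$-equivariance forces surjectivity onto that summand. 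A final dimension comparison using the formul\ae\ collected in~\S\ref{Kaehlerian_tractors} confirms surjectivity onto the whole of $K^k$, yielding the corollary.
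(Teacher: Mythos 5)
Your reduction is exactly the paper's: Corollary~\ref{killing_fields_generate} comes down to the surjectivity of the $\mathrm{SU}(n+1)$-equivariant map $\bigodot^k\mathfrak{su}(n+1)\to K^k$ onto the space of Theorem~\ref{killing_tensors_on_cpn}, and your construction of that map is fine. The gap is in the step that carries all the weight, namely surjectivity. Your plan is to decompose $K^k$ into $\mathrm{SU}(n+1)$-irreducibles and to hit each summand by exhibiting one highest weight vector in the image. This is not carried out, and as stated it cannot work in general, because the target is \emph{not} multiplicity-free: already for $k=4$ on ${\mathbb{CP}}_2$ the $\mathrm{SU}(3)$-module with highest weight $2\lambda_1+2\lambda_2$ (dimension $27$) occurs with multiplicity two in the space of rank-$4$ Killing tensors (decompose $\bigoplus_{p+2q=4}V_{p,q}\otimes\overline{V_{p,q}}$ and subtract the $(3,3)$ trace part; the count $1+8+2\cdot27+64+125+35+35=322$ matches the table in the paper). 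Equivariance plus one highest weight vector only shows the image contains one copy of that isotypic type; to get surjectivity you would need $m$ vectors in the image with linearly independent highest-weight projections for every type of multiplicity $m$, and nothing in your outline produces these. Two further points: highest weight vectors of the constituents of $\bigodot^k\mathfrak{g}$ are not in general Cartan products of root vectors, and the plethysm of the adjoint representation together with the full branching of $K^k$ has no closed form uniform in $k$ and $n$ (the paper only records the $k=2$ table and remarks that higher $k$ leads to ``serious expressions''). Finally, the concluding ``dimension comparison'' cannot rescue the argument: the map $\bigodot^k\mathfrak{g}\to K^k$ has a nontrivial kernel (Delong's computation quoted in the paper gives a one-dimensional kernel already for $k=3$ on ${\mathbb{CP}}_2$, and for $k=4$, $n=2$ one has $\dim\bigodot^4\mathfrak{su}(3)=330>322$), so comparing dimensions of source and target says nothing, while comparing the target with the image is circular since the image is the unknown.

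For contrast, the paper sidesteps all decomposition questions: using the invariant inner products it replaces surjectivity by injectivity of the transpose map from the two-row Young-symmetry space (with $J\Sigma=0$ and $J$-trace-free) into $\bigodot^k\Wedge^{1,1}$, and proves that injectivity by elementary index manipulations --- first the classical fact that Riemann-tensor symmetries force the pair-interchange symmetry, so the $(k,k)$ Young space sits inside $\bigodot^k\Wedge^2$, and then a rearrangement of type-$(k,k)$ tensors into combinations of tensors of the form $R_{a\bar{b}c\bar{d}e\bar{f}\cdots}$, an argument that survives the trace-free restriction. If you wish to keep your highest-weight strategy you must confront the multiplicity problem head-on; otherwise the dualization argument is both shorter and uniform in $k$ and~$n$.
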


I have been informed by Vladimir Matveev that he and Yuri Nikolayevsky have 
recently found the Killing tensors of rank~$2$ on both ${\mathbb{CP}}_n$ 
and~${\mathbb{HP}}_n$. 

I would like to thank Federico Costanza, Thomas Leistner, and Benjamin McMillan
for many useful conversations concerning this work.

\section{The Killing connection}
Suppose $\nabla_a$ is a torsion-free connection on a smooth manifold~$M$.  
Following~\cite{CELM}, we
may define the {\em Killing connection\/} on the vector bundle
$E\equiv\Wedge^1\oplus\Wedge^2$ by
\begin{equation}\label{the_killing_connection}
\nabla_b\left[\!\begin{array}{c}\sigma_c\\ \mu_{cd}\end{array}\!\right]
=\left[\!\begin{array}{c}\nabla_b\sigma_c-\mu_{bc}\\ 
\nabla_a\mu_{cd}-R_{cd}{}^e{}_b\sigma_e\end{array}\!\right]\end{equation} 
where $R_{ab}{}^c{}_d$ is defined by (\ref{curvature_tensor}).
\begin{prop}[cf.~\cite{K}]\label{prolongation} On any Riemannian manifold, the
differential operator
$$\Wedge^1\to E\quad\mbox{given by}\enskip
\sigma_b\mapsto
\left[\!\begin{array}{c}\sigma_b\\ \nabla_{[b}\sigma_{c]}\end{array}\!\right]$$
induces an isomorphism 
\begin{equation}\label{prolongation_isomorphism}
\{\sigma_b\in\Gamma(\Wedge^1)\mid\nabla_{(a}\sigma_{b)}=0\}
\cong
\{\Sigma\in\Gamma(E)\mid\nabla_a\Sigma=0\}.\end{equation}
\end{prop}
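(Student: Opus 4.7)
The plan is to check directly that the map $\sigma_b \mapsto (\sigma_b,\,\nabla_{[b}\sigma_{c]})$ and the projection $(\sigma_b,\mu_{bc}) \mapsto \sigma_b$ are mutually inverse between the two spaces in~(\ref{prolongation_isomorphism}). Call (i) the assertion that, when $\sigma_b$ is Killing, the section $\Sigma = (\sigma_b,\nabla_{[b}\sigma_{c]})$ is parallel for the connection (\ref{the_killing_connection}), and (ii) the converse assertion that every parallel section of $E$ arises in this way.

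The easier half is (ii). Setting the top row of (\ref{the_killing_connection}) to zero forces $\mu_{bc} = \nabla_b\sigma_c$; since $\mu_{bc}$ is skew in $b,c$ by construction, its symmetric part vanishes, which is precisely the Killing equation $\nabla_{(b}\sigma_{c)}=0$, while its skew part reads $\mu_{bc} = \nabla_{[b}\sigma_{c]}$. Thus $\Sigma$ is completely recovered from~$\sigma_b$, and the two maps are mutually inverse on their respective kernels.

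The substance therefore lies in (i). The top row of (\ref{the_killing_connection}) vanishes on~$\Sigma$ trivially: the Killing equation makes $\nabla_b\sigma_c$ already skew, so it agrees with $\nabla_{[b}\sigma_{c]}$. The bottom row reduces to the classical identity
\[\nabla_b\nabla_c\sigma_d \;=\; R_{cd}{}^e{}_b\,\sigma_e\]
for a Killing 1-form, which I would obtain by the usual cyclic manoeuvre. Starting from $\nabla_b\nabla_c\sigma_d$, alternately apply the Killing equation in pairs of indices and the curvature commutation identity~(\ref{curvature_tensor}), applied to the 1-form $\sigma$, cycling through $\nabla_d\nabla_b\sigma_c$, $\nabla_d\nabla_c\sigma_b$, $\nabla_c\nabla_d\sigma_b$, and $\nabla_c\nabla_b\sigma_d$ before returning to $-\nabla_b\nabla_c\sigma_d$. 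Solving the resulting linear relation for $\nabla_b\nabla_c\sigma_d$ produces three Riemann-tensor contributions, which collapse to the stated right-hand side via the algebraic Bianchi identity together with the antisymmetry of $R_{ab}{}^c{}_d$ in its first pair of indices.

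The sole genuine obstacle is this curvature bookkeeping, and even that is a staple of Killing-field geometry; all other claims are definition-chasing. Once the identity is established, the proposition is immediate.
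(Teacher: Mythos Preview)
Your proof is correct, and the backward direction (ii) is handled exactly as in the paper. For the forward direction (i), however, the paper takes a shorter path to the identity $\nabla_b\mu_{cd}=R_{cd}{}^e{}_b\sigma_e$: it notes that $\mu_{bc}=\nabla_b\sigma_c$ is manifestly closed (being $d\sigma$), so the cyclic identity $\nabla_a\mu_{bc}+\nabla_b\mu_{ca}+\nabla_c\mu_{ab}=0$ rewrites $\nabla_a\mu_{bc}$ as $\nabla_c\nabla_b\sigma_a-\nabla_b\nabla_c\sigma_a$, which is a single curvature commutator. This two-line argument avoids the algebraic Bianchi identity entirely. Your route---the classical Kostant cyclic manoeuvre with three commutators collapsed via Bianchi---is equally valid and perhaps more widely quoted, but the paper's observation that closedness of $\mu$ does the work is the slicker of the two.
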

\begin{proof} For a covariant constant section of~$E$, already the 
first line of the connection tells us that $\nabla_b\sigma_c$ is skew, which 
is precisely saying that $\nabla_{(b}\sigma_{c)}=0$. Conversely, if 
$\nabla_{(b}\sigma_{c)}=0$, then we may set $\mu_{bc}=\nabla_b\sigma_c$, which 
is skew and manifestly closed. Therefore,
\begin{equation}\label{closure}
\nabla_a\mu_{bc}=\nabla_c\mu_{ba}-\nabla_b\mu_{ca}
=\nabla_c\nabla_b\sigma_a-\nabla_b\nabla_c\sigma_a=R_{bc}{}^d{}_a\sigma_d,
\end{equation}
as required. \end{proof}

This reasoning is typical of `prolongation' in which one introduces variables
for unknown derivatives and then attempts to determine the derivatives of these
new variables as differential consequences of the original equations.  We can
say that the connection (\ref{the_killing_connection}) is a `prolongation
connection' for the Killing operator.  Beware, however, that
in~\S\ref{Kaehlerian_tractors} we shall find on ${\mathbb{CP}}_n$ another
connection with the same property.

For the curvature of the Killing connection, a straightforward calculation
shows that
$$(\nabla_a\nabla_b-\nabla_b\nabla_a)
\left[\!\begin{array}{c}\sigma_c\\ \mu_{cd}\end{array}\!\right]
=\left[\!\begin{array}{c}0\\ 
2R_{ab}{}^e{}_{[c}\mu_{d]e}+2R_{cd}{}^e{}_{[a}\mu_{b]e}
-(\nabla^eR_{abcd})\sigma_e
\end{array}\!\right].$$
Hence, the Killing connection is flat on the unit sphere where
$R_{abcd}=g_{ac}g_{bd}-g_{bc}g_{ad}$.  It follows that the space of Killing
fields on $S^n$ may be identified with the fibre of $E$ at any point and, in
particular, its dimension is $n(n+1)/2$ (in agreement with (\ref{DTT})
for~$k=1$).

On ${\mathbb{CP}}_n$ we may scale the
Fubini-Study metric $g_{ab}$ and K\"ahler form $J_{ab}$ so that
\begin{equation}\label{cpn_curvature}
R_{abcd}=g_{ac}g_{bd}-g_{bc}g_{ad}+J_{ac}J_{bd}-J_{bc}J_{ad}+2J_{ab}J_{cd}
\end{equation}
and so $\nabla^eR_{abcd}=0$ whilst
\begin{equation}\label{mu_is_type_one_one}
R_{ab}{}^e{}_{[c}\mu_{d]e}+R_{cd}{}^e{}_{[a}\mu_{b]e}
=J_{bc}\xi_{ad}-J_{ac}\xi_{bd}-J_{bd}\xi_{ac}+J_{ad}\xi_{bc}
-2J_{ab}\xi_{cd}-2J_{cd}\xi_{ab},\end{equation}
where $\xi_{ab}\equiv J_{[a}{}^c\mu_{b]c}$. So the Killing connection is not 
flat but instead has a flat subbundle
$$F\equiv\begin{array}{c}\Wedge^1\\[-3pt] \oplus\\[-1pt] 
K\end{array}\enskip\mbox{where}\enskip
K\equiv\{\mu_{cd}\mid R_{ab}{}^e{}_{[c}\mu_{d]e}+R_{cd}{}^e{}_{[a}\mu_{b]e}=0\}
=\{\mu_{cd}\mid J_{[a}{}^c\mu_{b]c}=0\}=\Wedge_{\mathbb{R}}^{1,1},$$
where $\Wedge_{\mathbb{R}}^{1,1}$ denotes the real $2$-forms of type~$(1,1)$. 
As explained in~\cite{CELM}, it is a general feature of the Killing connection 
on any Riemannian locally symmetric space that $F$ is parallel and, since 
${\mathbb{CP}}_n$ is simply-connected, we conclude 
that the Killing fields on ${\mathbb{CP}}_n$ may be identified with the fibre 
of $F$ at any chosen `basepoint.' Of course, this is consistent with the 
well-known identification of the symmetry algebra as
$${\mathfrak{su}}(n+1)={\mathfrak{u}}(n)\oplus{\mathfrak{m}}$$
where ${\mathfrak{m}}$ is a vector space of dimension $=\dim{\mathbb{CP}}_n=2n$
and $\dim{\mathfrak{u}}(n)$ = $n^2$.  In particular, the vector space of
Killing fields on ${\mathbb{CP}}_n$ has dimension $n(n+2)$.

\section{Riemannian tractor bundles}
On any Riemannian manifold with metric $g_{ab}$, let us introduce the vector 
bundle
\begin{equation}\label{R_tractors}
{\mathcal{T}}\equiv
\begin{array}{c}\Wedge^0\\[-3pt] \oplus\\[-1pt] \Wedge^1\end{array}\enskip
\mbox{with connection}\enskip
\nabla_a\left[\!\begin{array}{c}\sigma\\ \mu_b\end{array}\!\right]
\!\equiv\!\left[\!\begin{array}{c}\nabla_a\sigma-\mu_a\\ 
\nabla_a\mu_b+g_{ab}\sigma\end{array}\!\right],\end{equation}
where $\nabla_a\mu_b$ denotes the Levi-Civita connection on $1$-forms.
\begin{prop}\label{R-prolong}
The differential operator
$$\Wedge^0\to{\mathcal{T}}\quad\mbox{given by}\enskip
\sigma\mapsto
\left[\!\begin{array}{c}\sigma\\ \nabla_b\sigma\end{array}\!\right]$$
induces an isomorphism 
$$\{\sigma\in\Gamma(\Wedge^0)\mid\nabla_a\nabla_b\sigma+g_{ab}\sigma=0\}
\cong\{\Sigma_\beta\in\Gamma({\mathcal{T}})\mid\nabla_a\Sigma_\beta=0\}.$$
\end{prop}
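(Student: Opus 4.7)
The plan is to mimic the proof of Proposition~\ref{prolongation}, the point being that the prolongation is even more straightforward in this scalar setting: no Ricci identity is required. The map in the forward direction is manifestly well-defined, so the content lies in checking that a parallel section of $\mathcal{T}$ is determined by its top slot and that this slot lies in the kernel of the second-order operator.

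First I would take a covariant constant section $(\sigma,\mu_b)$ of $\mathcal{T}$ and read off from the first line of~(\ref{R_tractors}) that $\mu_a=\nabla_a\sigma$. Substituting this identification into the vanishing of the second line yields
$$\nabla_a\nabla_b\sigma+g_{ab}\sigma=0,$$
which is exactly the equation defining the left-hand kernel.

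Conversely, given $\sigma$ with $\nabla_a\nabla_b\sigma+g_{ab}\sigma=0$, I would set $\mu_b\equiv\nabla_b\sigma$ and verify directly that the pair $(\sigma,\mu_b)$ is annihilated by~(\ref{R_tractors}): the first row vanishes by definition of~$\mu_b$, while the second row becomes $\nabla_a\nabla_b\sigma+g_{ab}\sigma$, which vanishes by hypothesis. The two constructions are manifestly mutually inverse, giving the claimed isomorphism.

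There is essentially no obstacle here: unlike in Proposition~\ref{prolongation}, where one had to use the Ricci identity to derive the `closure' relation~(\ref{closure}) and thereby see that $\nabla_a\mu_{bc}$ takes the form prescribed by the connection, the equation $\nabla_a\nabla_b\sigma+g_{ab}\sigma=0$ is itself already the condition demanded by the second slot of the tractor connection. One should however remark, in passing, that this second-order equation is automatically consistent, since $\nabla_a\nabla_b\sigma$ is symmetric in $a,b$ on a scalar for any torsion-free connection, matching the symmetry of~$g_{ab}$.
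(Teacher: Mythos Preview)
Your proof is correct and follows essentially the same approach as the paper: the paper's proof merely says ``this is simply a matter of unpacking the meaning of $\nabla_a\Sigma_\beta=0$,'' and your argument is exactly that unpacking, spelled out in full. Your additional remarks comparing with Proposition~\ref{prolongation} and noting the automatic symmetry of $\nabla_a\nabla_b\sigma$ are accurate commentary, though not strictly needed.
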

\begin{proof} This is simply a matter of unpacking the meaning of 
$\nabla_a\Sigma_\beta=0$. In effect, one is setting $\mu_b=\nabla_b\sigma$ and 
rewriting the differential equation $\nabla_a\nabla_b\sigma+g_{ab}\sigma=0$ 
accordingly. (This is an especially simple case of prolongation.)
\end{proof}
\begin{prop} The curvature of the Riemannian tractor connection is given by 
$$(\nabla_a\nabla_b-\nabla_b\nabla_a)
\left[\!\begin{array}c\sigma\\ \mu_c\\ \end{array}\!\right]
=\left[\!\begin{array}{c}0\\ 
g_{bc}\mu_a-g_{ac}\mu_b-R_{ab}{}^d{}_c\mu_d\\ \end{array}\!\right].$$
\end{prop}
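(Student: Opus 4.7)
The plan is a direct calculation: apply the tractor connection twice to $\Sigma=[\sigma,\mu_c]^T$ and then antisymmetrise in $a,b$. First I would compute $\nabla_b\Sigma$ from (\ref{R_tractors}), writing the result as a section of $T^*M\otimes\mathcal{T}$ with external tensor index $b$ and components $A_b\equiv\nabla_b\sigma-\mu_b$ and $B_{bc}\equiv\nabla_b\mu_c+g_{bc}\sigma$ (here $c$ is the internal tractor index).

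Next I would apply $\nabla_a$ using the tensor-product connection: the Levi-Civita connection handles the external index $b$, while the tractor formula (\ref{R_tractors}) acts on the tractor slot, the internal index $c$ being replaced by the derivative index $a$ wherever the formula calls for contraction with the direction of differentiation. This yields
$$\nabla_a\nabla_b\Sigma=\left[\!\begin{array}{c}\nabla_a A_b-B_{ba}\\ \nabla_a B_{bc}+g_{ac}A_b\end{array}\!\right].$$
Substituting back the definitions of $A$ and $B$ and antisymmetrising in $a,b$ should then reproduce the claimed curvature. In the first slot the $\nabla\mu$ and $g\sigma$ contributions are manifestly symmetric in $a,b$ and cancel, leaving only $[\nabla_a,\nabla_b]\sigma$, which vanishes because $\sigma$ is a scalar. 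In the second slot, the $g_{bc}\nabla_a\sigma$ and $g_{ac}\nabla_b\sigma$ terms cancel pairwise on antisymmetrising, the contribution $-g_{ac}\mu_b$ coming from $g_{ac}A_b$ assembles into $g_{bc}\mu_a-g_{ac}\mu_b$, and the surviving piece $[\nabla_a,\nabla_b]\mu_c$ produces $-R_{ab}{}^d{}_c\mu_d$ via the Ricci identity on a $1$-form.

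The only real hazard is bookkeeping: distinguishing external from internal indices when unpacking the tensor-product connection, and remembering that on a covector the commutator of covariant derivatives picks up the opposite sign from the vector convention (\ref{curvature_tensor}). Once those sign and index conventions are pinned down, the proposition drops out by inspection, with no further geometric input beyond the two definitions.
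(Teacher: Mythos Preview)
Your proposal is correct and is essentially identical to the paper's own proof: the paper computes $\nabla_a\nabla_b\Sigma$ by feeding $\nabla_b\Sigma$ back into the tractor formula~(\ref{R_tractors}) and then antisymmetrises, obtaining precisely the two-line display you describe (your $A_b$ and $B_{bc}$ are just names for the components the paper writes out in full). Your remarks on which terms cancel and on the sign of $[\nabla_a,\nabla_b]\mu_c$ are accurate.
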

\begin{proof}
We compute
$$\nabla_a\nabla_b\left[\!\begin{array}c\sigma\\ \mu_c \end{array}\!\right]
=\left[\!\begin{array}c
\nabla_a(\nabla_b\sigma-\mu_b)-(\nabla_b\mu_a+g_{ba}\sigma)\\ 
\nabla_a(\nabla_b\mu_c+g_{bc}\sigma)+g_{ac}(\nabla_b\sigma-\mu_b)
\\ \end{array}\!\right]$$
so
$$(\nabla_a\nabla_b-\nabla_b\nabla_a)
\left[\!\begin{array}c\sigma\\ \mu_c\\ \end{array}\!\right]
=\left[\!\begin{array}c0\\ 
(\nabla_a\nabla_b-\nabla_b\nabla_a)\mu_c+g_{bc}\mu_a-g_{ac}\mu_b
\end{array}\!\right],$$
as required.\end{proof}
\begin{cor} The tractor connection \eqref{R_tractors} is flat on the unit 
sphere~$S^n$.
\end{cor}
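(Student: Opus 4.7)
The plan is simply to substitute the unit sphere's curvature tensor into the general curvature formula for the Riemannian tractor connection just computed, and observe that the result vanishes identically. Since the first (top) slot of the curvature already vanishes on any Riemannian manifold, the only work is to check that the bottom slot $g_{bc}\mu_a - g_{ac}\mu_b - R_{ab}{}^d{}_c\mu_d$ is zero when $R_{abcd}$ takes its round-sphere form.

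Concretely, I would recall that on the unit sphere $S^n$ the Riemann tensor is $R_{abcd}=g_{ac}g_{bd}-g_{bc}g_{ad}$, so that raising an index gives $R_{ab}{}^d{}_c = \delta_a{}^d g_{bc} - \delta_b{}^d g_{ac}$, and hence $R_{ab}{}^d{}_c\mu_d = g_{bc}\mu_a - g_{ac}\mu_b$. Substituting this into the bottom slot of the curvature formula produces an exact cancellation, so $(\nabla_a\nabla_b-\nabla_b\nabla_a)\Sigma = 0$ for every section $\Sigma$ of $\mathcal{T}$, which is the statement of flatness.

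There is really no obstacle here: once the curvature formula of the preceding proposition is in hand, the corollary is a one-line verification. If anything, the only thing worth remarking on is that the specific normalisation in the definition \eqref{R_tractors} (with the particular coefficient $+g_{ab}\sigma$ in the bottom slot, and $-\mu_a$ in the top slot) has been chosen precisely so that this cancellation happens against the unit-sphere curvature; no other scaling of those terms would give a flat connection. It is then worth noting, as a consequence, that Proposition~\ref{R-prolong} together with flatness identifies the space of solutions to $\nabla_a\nabla_b\sigma+g_{ab}\sigma=0$ on $S^n$ with the fibre of $\mathcal{T}$, and hence computes its dimension as $n+1$.
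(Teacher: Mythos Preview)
Your proof is correct and follows exactly the same approach as the paper's own proof, which simply recalls that $R_{abcd}=g_{ac}g_{bd}-g_{bc}g_{ad}$ on the unit sphere and leaves the substitution into the curvature formula implicit. Your additional remarks about the choice of normalisation and the resulting dimension count are accurate but go beyond what the paper records for this corollary.
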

\begin{proof} The Riemann curvature tensor on the unit $n$-sphere
is $R_{abcd}=g_{ac}g_{bd}-g_{bc}g_{ad}$.
\end{proof}
\begin{prop} The induced tractor connection on $\Wedge^2{\mathcal{T}}$ is 
given by 
\begin{equation}\label{killing_connection_on_sphere}
\Wedge^2{\mathcal{T}}=
\begin{array}{c}\Wedge^1\\[-3pt] \oplus\\[-1pt] \Wedge^2\end{array}\enskip
\mbox{with}\enskip
\nabla_a\left[\!\begin{array}{c}\sigma_b\\ \mu_{bc}\end{array}\!\right]
\!=\!\left[\!\begin{array}{c}\nabla_a\sigma_b-\mu_{ab}\\ 
\nabla_a\mu_b-g_{ac}\sigma_b+g_{ab}\sigma_c\end{array}\!\right].\end{equation}
\end{prop}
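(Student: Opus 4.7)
The plan is to derive the formula by applying the Leibniz rule for the wedge product to the tractor connection (\ref{R_tractors}). First, I would unpack the identification
$$\Wedge^2\mathcal{T}=\Wedge^2(\Wedge^0\oplus\Wedge^1)=(\Wedge^0\otimes\Wedge^1)\oplus\Wedge^2\Wedge^1\cong\Wedge^1\oplus\Wedge^2,$$
and observe that a decomposable section $\Sigma\wedge\Sigma'$, with $\Sigma=(\sigma,\mu_b)$ and $\Sigma'=(\tau,\nu_b)$, corresponds under this to the pair
$$\bigl(\sigma\nu_b-\tau\mu_b,\;\mu_b\nu_c-\mu_c\nu_b\bigr)=(\sigma_b,\mu_{bc}).$$

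Next, I would compute $\nabla_a(\Sigma\wedge\Sigma')=(\nabla_a\Sigma)\wedge\Sigma'+\Sigma\wedge(\nabla_a\Sigma')$ by substituting (\ref{R_tractors}) and then reading off the two components. In the $\Wedge^1$ slot this yields
$$(\nabla_a\sigma-\mu_a)\nu_b-\tau(\nabla_a\mu_b+g_{ab}\sigma)+\sigma(\nabla_a\nu_b+g_{ab}\tau)-(\nabla_a\tau-\nu_a)\mu_b,$$
in which the two $g_{ab}\sigma\tau$ contributions cancel and the remaining terms reorganise as $\nabla_a(\sigma\nu_b-\tau\mu_b)-(\mu_a\nu_b-\nu_a\mu_b)$, matching $\nabla_a\sigma_b-\mu_{ab}$. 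In the $\Wedge^2$ slot the analogous expansion of $(\nabla_a\Sigma)\wedge\Sigma'+\Sigma\wedge(\nabla_a\Sigma')$ produces the pure derivative part $\nabla_a(\mu_b\nu_c-\mu_c\nu_b)=\nabla_a\mu_{bc}$ together with the algebraic part
$$g_{ab}\sigma\nu_c-g_{ac}\sigma\nu_b+g_{ac}\mu_b\tau-g_{ab}\mu_c\tau=g_{ab}(\sigma\nu_c-\tau\mu_c)-g_{ac}(\sigma\nu_b-\tau\mu_b)=g_{ab}\sigma_c-g_{ac}\sigma_b,$$
again matching the claimed formula.

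Since every section of $\Wedge^2\mathcal{T}$ is locally a sum of decomposable sections $\Sigma\wedge\Sigma'$, this Leibniz-determined formula extends uniquely by linearity to the whole bundle. The computation is essentially bookkeeping of signs and indices; the only place requiring care is to confirm that the off-diagonal $g_{ab}\sigma$ term of (\ref{R_tractors}) produces the complete cancellation witnessed in the $\Wedge^1$ slot while simultaneously yielding the asymmetric pair $g_{ab}\sigma_c-g_{ac}\sigma_b$ in the $\Wedge^2$ slot. Once the alternation in the wedge product is tracked correctly these fall out automatically.
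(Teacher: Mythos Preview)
Your proposal is correct and follows exactly the approach the paper indicates: the paper's own proof simply says it suffices to verify the formula on decomposable tractors $\Sigma_{[\alpha}\tilde\Sigma_{\beta]}$ and leaves that verification to the reader, which is precisely the Leibniz-rule computation you have written out in full.
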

\begin{proof} It suffices to verify this for tractors of the form
$\Sigma_{[\alpha}\tilde\Sigma_{\beta]}$.  This verification is left to the
reader.
\end{proof}
\begin{cor} The connection \eqref{killing_connection_on_sphere} is flat on the 
unit $n$-sphere~$S^n$.
\end{cor}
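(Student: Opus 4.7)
The cleanest strategy is to avoid a recomputation and simply invoke functoriality. The preceding corollary already establishes that the Riemannian tractor connection $\nabla_a$ on $\mathcal{T}$ is flat on the unit sphere~$S^n$. The connection appearing in (\ref{killing_connection_on_sphere}) was characterised in the preceding proposition as the one induced on $\Wedge^2\mathcal{T}$ from $\nabla_a$ on~$\mathcal{T}$ (the proof of which was by verification on decomposable tractors $\Sigma_{[\alpha}\tilde\Sigma_{\beta]}$). Because exterior powers are a natural tensor construction, any induced connection inherits the curvature of the underlying one by the Leibniz rule, and hence inherits flatness. So the plan is just to write down this one-line argument.

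For readers who would prefer a direct check, the alternative route is to compute $(\nabla_a\nabla_b-\nabla_b\nabla_a)[\sigma_c;\mu_{cd}]$ from (\ref{killing_connection_on_sphere}) and verify it vanishes. Substituting $R_{abcd}=g_{ac}g_{bd}-g_{bc}g_{ad}$, the top-slot commutator produces only antisymmetrised terms involving $\mu_{ab}$ that cancel algebraically, while the bottom-slot commutator picks up the spherical Riemann curvature acting on each index of $\mu_{cd}$ together with the $g_{ab}\sigma_c$-type terms from the explicit formula; these cancel exactly, mirroring the cancellation already seen on~$\mathcal{T}$ itself. There is no genuine obstacle here, so I would favour the functorial one-line proof and relegate the direct check to a remark, if at all.
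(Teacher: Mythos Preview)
Your proposal is correct and matches the paper's intent: the corollary is stated without proof precisely because it is immediate from the preceding corollary (flatness of $\nabla_a$ on~$\mathcal{T}$) together with the preceding proposition (that \eqref{killing_connection_on_sphere} is the induced connection on~$\Wedge^2\mathcal{T}$), via the functoriality/Leibniz argument you describe. Your optional direct check is also fine but, as you say, unnecessary.
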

\begin{prop}\label{killing_fields_prolonged}The differential operator
$$\Wedge^1\to\Wedge^2{\mathcal{T}}\quad\mbox{given by}\enskip
\sigma_b\mapsto
\left[\!\begin{array}{c}\sigma_b\\ \nabla_{[b}\sigma_{c]}\end{array}\!\right]$$
induces an isomorphism 
$$\{\sigma_b\in\Gamma(\Wedge^1)\mid\nabla_{(a}\sigma_{b)}=0\}
\cong
\{\Sigma_{\beta\gamma}\in\Gamma(\Wedge^2{\mathcal{T}})\mid
\nabla_a\Sigma_{\beta\gamma}=0\}$$
on the unit $n$-sphere.
\end{prop}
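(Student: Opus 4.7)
My plan is to reduce Proposition~\ref{killing_fields_prolonged} directly to Proposition~\ref{prolongation}, by showing that on the unit sphere the induced tractor connection on $\Wedge^2{\mathcal{T}}$ exhibited in \eqref{killing_connection_on_sphere} agrees with the Killing connection \eqref{the_killing_connection}. If the two connections genuinely coincide, then the bundle $\Wedge^2{\mathcal{T}}$ of the current proposition is precisely the bundle $E$ of Proposition~\ref{prolongation}, the prolongation operator $\sigma_b\mapsto[\sigma_b;\nabla_{[b}\sigma_{c]}]$ is the same in both statements, and the claimed isomorphism follows at once from the already-established result.

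To verify the agreement of the two connections, I would first note that the top entries of \eqref{the_killing_connection} and \eqref{killing_connection_on_sphere} already match verbatim (both read $\nabla_a\sigma_b-\mu_{ab}$ after renaming indices). For the bottom entries, I would substitute the spherical curvature $R_{abcd}=g_{ac}g_{bd}-g_{bc}g_{ad}$ into the term $R_{cd}{}^e{}_b\sigma_e$ appearing in \eqref{the_killing_connection}, obtaining $g_{bd}\sigma_c-g_{bc}\sigma_d$. After relabeling the free indices $(b,c,d)\to(a,b,c)$, the bottom entry of \eqref{the_killing_connection} then becomes $\nabla_a\mu_{bc}-g_{ac}\sigma_b+g_{ab}\sigma_c$, which is exactly the bottom entry of \eqref{killing_connection_on_sphere}.

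I do not anticipate a substantive obstacle; the verification is pure bookkeeping, and the only point requiring attention is the use of the symmetry $R_{cdab}=R_{abcd}$ so that the indices on the curvature tensor line up correctly after substitution. Once the two connections are identified, Proposition~\ref{prolongation} applied on $S^n$ delivers the claimed isomorphism directly, and the accompanying flatness (from the corollary immediately preceding the proposition) ensures that the space of covariant constant sections of $\Wedge^2{\mathcal{T}}$ has the expected dimension $n(n+1)/2$, recovering the classical count of Killing fields on the sphere as a sanity check.
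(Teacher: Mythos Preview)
Your approach is correct and essentially identical to the paper's own proof: the paper simply observes that formula \eqref{killing_connection_on_sphere} shows the tractor connection on $\Wedge^2{\mathcal{T}}$ coincides with the Killing connection \eqref{the_killing_connection} on the unit sphere, whence the desired isomorphism is precisely \eqref{prolongation_isomorphism} from Proposition~\ref{prolongation}. Your explicit verification of the bottom entry via the spherical curvature is a helpful elaboration of what the paper leaves implicit.
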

\begin{proof} The formula (\ref{killing_connection_on_sphere}) shows that the 
tractor connection on $\Wedge^2{\mathcal{T}}$ coincides with the Killing 
connection (\ref{the_killing_connection}) on the unit sphere. Hence, this 
isomorphism coincides with~(\ref{prolongation_isomorphism}). 
\end{proof}
As a minor variation on an already noted consequence, we have:
\begin{cor}
Let ${\mathbb{T}}$ denote the $(n+1)$-dimensional space of covariant constant
sections of~${\mathcal{T}}$.  Then we may identify the space of Killing fields
on $S^n$ with $\Wedge^2{\mathbb{T}}$.  In particular, the Killing fields on the
round sphere comprise an irreducible representation of
${\mathrm{SL}}(n+1,{\mathbb{R}})$ of dimension $n(n+1)/2$.
\end{cor}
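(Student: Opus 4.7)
The plan is to assemble three pieces: the flatness of the tractor connection on $S^n$ (the corollary just above), the prolongation isomorphism of Proposition~\ref{killing_fields_prolonged}, and a short representation-theoretic argument. First, since ${\mathcal{T}}$ is flat and $S^n$ is simply connected (for $n\ge 2$; the circle $S^1$ can be checked directly), parallel transport identifies ${\mathbb{T}}$ with the fibre of ${\mathcal{T}}$ at any chosen basepoint, giving $\dim{\mathbb{T}}=\mathrm{rank}\,{\mathcal{T}}=1+n=n+1$ as claimed.

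Next I would invoke the connection on $\Wedge^2{\mathcal{T}}$ recorded in~(\ref{killing_connection_on_sphere}), which is flat on the unit sphere by the corollary immediately following it. Given any parallel frame $\Sigma_1,\ldots,\Sigma_{n+1}$ for ${\mathcal{T}}$, the exterior products $\{\Sigma_i\wedge\Sigma_j\}_{i<j}$ form a parallel frame for $\Wedge^2{\mathcal{T}}$, so the space of covariant constant sections of $\Wedge^2{\mathcal{T}}$ is canonically $\Wedge^2{\mathbb{T}}$, of dimension $\binom{n+1}{2}=n(n+1)/2$. Proposition~\ref{killing_fields_prolonged} then identifies this latter space with the Killing fields on $S^n$, yielding both the identification with $\Wedge^2{\mathbb{T}}$ and the asserted dimension.

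The remaining point is the irreducibility of $\Wedge^2{\mathbb{T}}$ as an ${\mathrm{SL}}(n+1,{\mathbb{R}})$-representation, and this is the only place where anything beyond unpacking is needed. One must know that the natural action of the projective symmetry group on ${\mathbb{T}}$ realises it as the defining $(n+1)$-dimensional representation of ${\mathrm{SL}}(n+1,{\mathbb{R}})$; this rests on the coincidence, sketched in the introduction, of the Riemannian tractor bundle on the round sphere with the projective Thomas bundle, under which projective symmetries act on ${\mathcal{T}}$ in the standard way. Granted this, $\Wedge^2{\mathbb{T}}$ is the second fundamental representation of ${\mathrm{SL}}(n+1,{\mathbb{R}})$ with Dynkin marks $(0,1,0,\ldots,0)$, and hence irreducible, consistent with the $k=1$ case of Theorem~\ref{killing_tensors_on_the_sphere}.
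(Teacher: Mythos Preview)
Your proposal is correct and aligns with the paper's treatment, which presents this corollary without a separate proof, introducing it merely as ``a minor variation on an already noted consequence.'' You have simply unpacked the details the paper leaves implicit: flatness of the induced connection on $\Wedge^2{\mathcal{T}}$ together with simple-connectedness identifies parallel sections with the fibre $\Wedge^2{\mathbb{T}}$, and Proposition~\ref{killing_fields_prolonged} supplies the bridge to Killing fields; the irreducibility remark is exactly the $k=1$ case of Theorem~\ref{killing_tensors_on_the_sphere} as you note.
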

To state the following theorem, we freely employ Young tableau as 
in~\cite{FH,OT}.
\begin{thm} Consider the induced Riemannian tractor connection on the bundle
$$\begin{picture}(100,30)(0,-10)
\put(0,0){\line(0,1){20}}
\put(10,0){\line(0,1){20}}
\put(20,0){\line(0,1){20}}
\put(50,0){\line(0,1){20}}
\put(60,0){\line(0,1){20}}
\put(0,0){\line(1,0){60}}
\put(0,10){\line(1,0){60}}
\put(0,20){\line(1,0){60}}
\put(36,4.5){\makebox(0,0){$\cdots$}}
\put(36,14.5){\makebox(0,0){$\cdots$}}
\put(74,10){\makebox(0,0){\large$({\mathcal{T}}).$}}
\put(30,-9.5){\makebox(0,0){$\underbrace{\hspace{58pt}}_k$}}
\end{picture}$$
It is a prolongation connection for the differential operator 
\eqref{killing_tensor_operator} on $S^n$, i.e.~there is a 
canonical surjective homomorphism of vector bundles
\begin{equation}\label{canonical_surjection}
\textstyle
\begin{picture}(85,12)(-3,7)
\put(0,0){\line(0,1){20}}
\put(10,0){\line(0,1){20}}
\put(20,0){\line(0,1){20}}
\put(50,0){\line(0,1){20}}
\put(60,0){\line(0,1){20}}
\put(0,0){\line(1,0){60}}
\put(0,10){\line(1,0){60}}
\put(0,20){\line(1,0){60}}
\put(36,4.5){\makebox(0,0){$\cdots$}}
\put(36,14.5){\makebox(0,0){$\cdots$}}
\put(72,10){\makebox(0,0){\large$({\mathcal{T}})$}}
\end{picture}\to\bigodot^k\!\Wedge^1\end{equation}
inducing a (local) isomorphism 
$$\textstyle\{\Sigma\in\Gamma\Big(\begin{picture}(85,12)(-3,7)
\put(0,0){\line(0,1){20}}
\put(10,0){\line(0,1){20}}
\put(20,0){\line(0,1){20}}
\put(50,0){\line(0,1){20}}
\put(60,0){\line(0,1){20}}
\put(0,0){\line(1,0){60}}
\put(0,10){\line(1,0){60}}
\put(0,20){\line(1,0){60}}
\put(36,4.5){\makebox(0,0){$\cdots$}}
\put(36,14.5){\makebox(0,0){$\cdots$}}
\put(72,10){\makebox(0,0){\large$({\mathcal{T}})$}}
\end{picture}\Big)\mid\nabla_a\Sigma=0\}\cong
\{\sigma_{bc\cdots d}\in\Gamma(\bigodot^k\!\Wedge^1)\mid
\nabla_{(a}\sigma_{bc\cdots d)}=0\}.$$
\end{thm}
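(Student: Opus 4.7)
The plan is to realise the canonical surjection \eqref{canonical_surjection} by extracting the top piece of a composition series, and then establish the isomorphism of covariant constant sections with Killing tensors by combining flatness on $S^n$ with a dimension count.

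The short exact sequence $0 \to \Wedge^1 \to \mathcal{T} \to \Wedge^0 \to 0$ implicit in \eqref{R_tractors} induces a composition series on the rectangular Young tableau of shape $2 \times k$ applied to~$\mathcal{T}$. By Cauchy's decomposition together with Pieri's rule, and using that $\Wedge^0$ is a line bundle, the only contributions come from pairs $(\mu,\nu) = ([j], [k, k-j])$ for $0 \le j \le k$; in particular the top piece ($j=k$) is precisely $\bigodot^k \Wedge^1$. Projecting onto it produces the required bundle surjection.

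Next I would verify that covariant constant sections map to Killing tensors under this surjection. Extending \eqref{R_tractors} to the Young tableau bundle by the Leibniz rule, the top-slot component of $\nabla_a \Sigma = 0$ expresses $\nabla_a \sigma_{bc\cdots d}$ in terms of the next slot of~$\Sigma$. By the composition-series analysis, that slot sits in a Young tableau piece of shape $[k,k-1]$ on $\Wedge^1$, so its contribution carries a skew pair of indices and vanishes upon full symmetrisation over $a,b,c,\ldots,d$. This yields $\nabla_{(a} \sigma_{bc\cdots d)} = 0$, exactly as in the rank-one case handled by Proposition~\ref{killing_fields_prolonged}.

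Finally, I would promote this to an isomorphism by exploiting flatness. Since the tractor connection on $\mathcal{T}$ is flat on $S^n$, so is the induced connection on the Young tableau bundle, and its covariant constant sections form a vector space isomorphic to the corresponding representation of~$\mathbb{T}$. That representation has the same dimension as the space of Killing tensors of rank $k$ on $S^n$ by Theorem~\ref{killing_tensors_on_the_sphere}, so it suffices to check injectivity. This proceeds by the standard prolongation cascade: vanishing of the top slot of a covariant constant $\Sigma$ forces the next slot to be zero via $\nabla_a \Sigma = 0$, and inductively each lower slot. The main obstacle is the combinatorial bookkeeping — making the surjection and its connection formula precise enough in index notation to read off the Killing equation cleanly, and ensuring that the cascade of vanishings respects the Young symmetry at each step rather than merely the ambient tensor product.
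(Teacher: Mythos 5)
Your first two steps are essentially the paper's: the composition series of the $2\times k$ tableau bundle (which the paper writes out explicitly for $k=2,3$) produces the canonical surjection \eqref{canonical_surjection}, and the leading component of $\nabla_a\Sigma=0$ gives the Killing equation because the adjacent slot has a two-row symmetry type and so dies under total symmetrisation. (A bookkeeping slip: that slot is the shape $(k,1)$ piece of $\Wedge^1$, with $k+1$ indices -- the paper's $\mu_{bc\cdots de}$ with $\mu_{(bc\cdots de)}=0$ -- not shape $[k,k-1]$; your conclusion is unaffected.) Where you genuinely diverge is the converse direction. The paper obtains it from the BGG machinery, \cite[Theorem~2.1]{BCEG} together with \cite{E,EGover}, using that \eqref{R_tractors} coincides on the sphere with the costandard projective tractor connection: this shows directly that \emph{all} remaining components of $\nabla_a\Sigma=0$ are differential consequences of $\nabla_{(a}\sigma_{bc\cdots d)}=0$, i.e.\ it constructs the parallel lift of any local Killing tensor.

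Your substitute -- flatness, a dimension count against Theorem~\ref{killing_tensors_on_the_sphere}, and injectivity by the cascade -- has a genuine gap as a proof of the stated \emph{(local)} isomorphism. Flatness plus simple connectivity give that the global parallel sections form a space of dimension equal to the rank of the bundle, and Theorem~\ref{killing_tensors_on_the_sphere} gives the same number for the \emph{global} Killing tensors on $S^n$, so with injectivity you do get the global isomorphism. But on a proper open subset $U\subset S^n$ your count only yields $\dim\{\mbox{Killing tensors on }U\}\geq\dim\{\mbox{parallel sections on }U\}$, the inequality in the unhelpful direction, because Theorem~\ref{killing_tensors_on_the_sphere} says nothing about local solutions that might not extend to all of $S^n$. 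Closing this requires either constructing the lift of a local Killing tensor directly (exactly what the BGG/prolongation argument supplies; cf.\ \cite{GL} for the general statement) or an independent a priori bound on the local solution space -- the dimension count by itself cannot deliver the prolongation property. A further caveat of logical economy rather than correctness: this tractor construction is offered in the paper as an independent route to Theorem~\ref{killing_tensors_on_the_sphere}, so your appeal to that theorem is admissible only because it is quoted as known from elsewhere (e.g.\ \cite{E-treves}); unlike the paper's argument, yours could not then be used to rederive it.
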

\begin{proof} The canonical surjection $\Wedge^2{\mathcal{T}}\to\Wedge^1$ is
evident in (\ref{killing_connection_on_sphere}) and gives rise
to~(\ref{canonical_surjection}).  More precisely, 
$$\begin{picture}(45,12)(-3,7)
\put(0,0){\line(0,1){20}}
\put(10,0){\line(0,1){20}}
\put(20,0){\line(0,1){20}}
\put(0,0){\line(1,0){20}}
\put(0,10){\line(1,0){20}}
\put(0,20){\line(1,0){20}}
\put(32,10){\makebox(0,0){\large$({\mathcal{T}})$}}
\end{picture}=\begin{picture}(48,12)(-3,7)
\put(0,5){\line(0,1){10}}
\put(10,5){\line(0,1){10}}
\put(20,5){\line(0,1){10}}
\put(0,5){\line(1,0){20}}
\put(0,15){\line(1,0){20}}
\put(32,10){\makebox(0,0){$(\Wedge^1)$}}
\end{picture}\oplus\begin{picture}(48,12)(-3,7)
\put(0,0){\line(0,1){20}}
\put(10,0){\line(0,1){20}}
\put(20,10){\line(0,1){10}}
\put(0,0){\line(1,0){10}}
\put(0,10){\line(1,0){20}}
\put(0,20){\line(1,0){20}}
\put(32,10){\makebox(0,0){$(\Wedge^1)$}}
\end{picture}\oplus\begin{picture}(45,12)(-3,7)
\put(0,0){\line(0,1){20}}
\put(10,0){\line(0,1){20}}
\put(20,0){\line(0,1){20}}
\put(0,0){\line(1,0){20}}
\put(0,10){\line(1,0){20}}
\put(0,20){\line(1,0){20}}
\put(32,10){\makebox(0,0){$(\Wedge^1)$}}
\end{picture}\,,$$
and
$$\begin{picture}(55,12)(-3,7)
\put(0,0){\line(0,1){20}}
\put(10,0){\line(0,1){20}}
\put(20,0){\line(0,1){20}}
\put(30,0){\line(0,1){20}}
\put(0,0){\line(1,0){30}}
\put(0,10){\line(1,0){30}}
\put(0,20){\line(1,0){30}}
\put(42,10){\makebox(0,0){\large$({\mathcal{T}})$}}
\end{picture}=\begin{picture}(58,12)(-3,7)
\put(0,5){\line(0,1){10}}
\put(10,5){\line(0,1){10}}
\put(20,5){\line(0,1){10}}
\put(30,5){\line(0,1){10}}
\put(0,5){\line(1,0){30}}
\put(0,15){\line(1,0){30}}
\put(42,10){\makebox(0,0){$(\Wedge^1)$}}
\end{picture}\oplus\begin{picture}(58,12)(-3,7)
\put(0,0){\line(0,1){20}}
\put(10,0){\line(0,1){20}}
\put(20,10){\line(0,1){10}}
\put(30,10){\line(0,1){10}}
\put(0,0){\line(1,0){10}}
\put(0,10){\line(1,0){30}}
\put(0,20){\line(1,0){30}}
\put(42,10){\makebox(0,0){$(\Wedge^1)$}}
\end{picture}\oplus\begin{picture}(58,12)(-3,7)
\put(0,0){\line(0,1){20}}
\put(10,0){\line(0,1){20}}
\put(20,0){\line(0,1){20}}
\put(30,10){\line(0,1){10}}
\put(0,0){\line(1,0){20}}
\put(0,10){\line(1,0){30}}
\put(0,20){\line(1,0){30}}
\put(42,10){\makebox(0,0){$(\Wedge^1)$}}
\end{picture}\oplus\begin{picture}(55,12)(-3,7)
\put(0,0){\line(0,1){20}}
\put(10,0){\line(0,1){20}}
\put(20,0){\line(0,1){20}}
\put(30,0){\line(0,1){20}}
\put(0,0){\line(1,0){30}}
\put(0,10){\line(1,0){30}}
\put(0,20){\line(1,0){30}}
\put(42,10){\makebox(0,0){$(\Wedge^1)$}}
\end{picture}\,,$$

\smallskip\noindent
and so on. It is also easy to check that the induced connection takes the form
$$\nabla_a\left[\!\begin{array}{c}\sigma_{bc\cdots d}\\ 
\mu_{bc\cdots de}\\
\vdots\end{array}\!\right]
\!=\!\left[\!\begin{array}{c}\nabla_a\sigma_{bc\cdots d}-\mu_{abc\cdots d}\\ 
\nabla_a\mu_{bc\cdots de}-\cdots\\ 
\vdots\end{array}\!\right]\enskip\mbox{where}\enskip
\left\{\begin{array}{l}\sigma_{bc\cdots d}=\sigma_{(bc\cdots d)}\\
\mu_{bc\cdots de}=\mu_{b(c\cdots de)}\enskip\mbox{and}\enskip
\mu_{(bc\cdots de)}=0\\ \quad\vdots\end{array}\right.$$
so that $\nabla_a\Sigma=0\Rightarrow\nabla_{(a}\sigma_{bc\cdots d)}=0$.  That
the remaining equations encoded in $\nabla_a\Sigma=0$ follow from this first
equation is a consequence of the `BGG machinery,' as explained in~\cite{BCEG}
and especially~\cite[Theorem~2.1]{BCEG}.  For more details see \cite{E} and
\cite{EGover}, where one should bear in mind that, on the unit sphere, the
connection (\ref{R_tractors}) coincides with the costandard projective tractor
connection
$$\nabla_a\left[\!\begin{array}{c}\sigma\\ \mu_b\end{array}\!\right]
\!\equiv\!\left[\!\begin{array}{c}\nabla_a\sigma-\mu_a\\ 
\nabla_a\mu_b+\Rho_{ab}\sigma\end{array}\!\right],$$
where $\Rho_{ab}\equiv\frac1{n-1}R_{ab}$.
\end{proof}

Nowadays, this projective differential geometry viewpoint is commonly adopted
once one realises that (\ref{killing_tensor_operator}) is projectively
invariant.  Indeed, the Killing operator is just the first in a locally exact 
sequence of linear differential operators
$$0\to\begin{picture}(110,14)
\put(5,2){\makebox(0,0){$\bullet$}}
\put(25,2){\makebox(0,0){$\bullet$}}
\put(45,2){\makebox(0,0){$\bullet$}}
\put(65,2){\makebox(0,0){$\bullet$}}
\put(5,2){\line(1,0){70}}
\put(85,2){\makebox(0,0){$\cdots$}}
\put(95,2){\line(1,0){10}}
\put(105,2){\makebox(0,0){$\bullet$}}
\put(5,9){\makebox(0,0){$\scriptstyle 0$}}
\put(25,9){\makebox(0,0){$\scriptstyle k$}}
\put(45,9){\makebox(0,0){$\scriptstyle 0$}}
\put(65,9){\makebox(0,0){$\scriptstyle 0$}}
\put(105,9){\makebox(0,0){$\scriptstyle 0$}}
\end{picture}\to
\begin{picture}(110,14)
\put(5,2){\makebox(0,0){$\times$}}
\put(25,2){\makebox(0,0){$\bullet$}}
\put(45,2){\makebox(0,0){$\bullet$}}
\put(65,2){\makebox(0,0){$\bullet$}}
\put(5,2){\line(1,0){70}}
\put(85,2){\makebox(0,0){$\cdots$}}
\put(95,2){\line(1,0){10}}
\put(105,2){\makebox(0,0){$\bullet$}}
\put(5,9){\makebox(0,0){$\scriptstyle 0$}}
\put(25,9){\makebox(0,0){$\scriptstyle k$}}
\put(45,9){\makebox(0,0){$\scriptstyle 0$}}
\put(65,9){\makebox(0,0){$\scriptstyle 0$}}
\put(105,9){\makebox(0,0){$\scriptstyle 0$}}
\end{picture}\xrightarrow{\,\nabla\,}
\begin{picture}(110,14)
\put(5,2){\makebox(0,0){$\times$}}
\put(25,2){\makebox(0,0){$\bullet$}}
\put(45,2){\makebox(0,0){$\bullet$}}
\put(65,2){\makebox(0,0){$\bullet$}}
\put(5,2){\line(1,0){70}}
\put(85,2){\makebox(0,0){$\cdots$}}
\put(95,2){\line(1,0){10}}
\put(105,2){\makebox(0,0){$\bullet$}}
\put(5,9){\makebox(0,0){$\scriptstyle -2$}}
\put(25,9){\makebox(0,0){$\scriptstyle k+1$}}
\put(45,9){\makebox(0,0){$\scriptstyle 0$}}
\put(65,9){\makebox(0,0){$\scriptstyle 0$}}
\put(105,9){\makebox(0,0){$\scriptstyle 0$}}
\end{picture}\to\cdots$$
known as the `BGG complex,' which, as presented in~\cite{EGover} on the unit
sphere, generalises to resolve any finite-dimensional irreducible
representation of~${\mathrm{SL}}(n+1,{\mathbb{R}})$.

The original BGG complex is due to Bernstein-Gelfand-Gelfand~\cite{BGG},
formulated in terms of induced modules for any complex semisimple Lie algebra
${\mathfrak{g}}$ with Borel subalgebra~${\mathfrak{b}}$.  This complex was
generalised by Lepowsky~\cite{L}, replacing ${\mathfrak{b}}$ by an arbitrary
parabolic subalgebra~${\mathfrak{p}}$, and interpreted geometrically in
\cite[Chapter~8]{Beastwood} as locally exact complexes of invariant
differential operators on the corresponding flag manifolds~$G/P$.  The first of
these BGG operators has an irreducible finite-dimensional representation of
${\mathfrak{g}}$ as its kernel and, by design, all such irreducibles arise in
this way.  {From} this point of view, the Killing operator on $S^n$ descends to
${\mathbb{RP}}_n$ and complexifies to a first BGG operator on ${\mathbb{CP}}_n$
as a flag manifold for ${\mathrm{SL}}(n+1,{\mathbb{C}})$.  Such `real forms' 
as ${\mathbb{RP}}_n\subset{\mathbb{CP}}_n$ 
are nowadays well-incorporated into the theory of `parabolic differential 
geometry'~\cite{parabook}.

That the Killing tensors comprise an {\em irreducible\/} representation of 
the special linear group has been noted by many authors, e.g.~\cite{MMS,SY}.  
The dimension formula~(\ref{DTT}) was found independently by 
Delong~\cite{D}, Takeuchi~\cite{Takeuchi}, and Thompson~\cite{Thompson}.

\section{K\"ahlerian tractors}\label{Kaehlerian_tractors}

Before discussing K\"ahler manifolds in general, let us recall our choice of
scaling for the Fubini-Study metric on complex projective space so that 
(\ref{cpn_curvature}) holds. 
As explained in~\cite{EG}, this normalisation is so that
$S^n\to{\mathbb{RP}}_n\hookrightarrow{\mathbb{CP}}_n$ is totally geodesic for
the unit sphere (where $R_{abcd}=g_{ac}g_{bd}-g_{bc}g_{ad}$).

On a K\"ahler manifold, the tractor connection (\ref{R_tractors}) can be
modified as follows, especially to take account of the symplectic
structure~\cite{ES}:
\begin{equation}\label{K_tractors}
{\mathcal{T}}\equiv
\begin{array}{c}\Wedge^0\\[-3pt] \oplus\\[-1pt] \Wedge^1\\[-3pt]
\oplus\\[-1pt] \Wedge^0\end{array}\enskip\mbox{with connection}\enskip
\nabla_b\left[\!\begin{array}{c}\sigma\\ \mu_c\\ \rho\end{array}\!\right]
\!\equiv\!\left[\!\begin{array}{c}\nabla_b\sigma-\mu_b\\ 
\nabla_b\mu_c+g_{bc}\sigma+J_{bc}\rho\\
\nabla_b\rho-J_b{}^c\mu_c
\end{array}\!\right],\end{equation}
where $J_{ab}$ is the K\"ahler form and $J_a{}^b=J_{ac}g^{bc}$ is the complex
structure.  We shall find that this connection has better properties than 
(\ref{R_tractors}) and will henceforth use it exclusively. In particular, 
there is no need for a new notation for the basic tractor 
bundle~${\mathcal{T}}$. 
\begin{prop}\label{K_curvature}
The curvature of the K\"ahlerian tractor connection is given by
$$(\nabla_a\nabla_b-\nabla_b\nabla_a)
\left[\!\begin{array}c\sigma\\ \mu_c\\ \rho\end{array}\!\right]
=\left[\!\begin{array}c2J_{ab}\rho\\ 
g_{bc}\mu_a-g_{ac}\mu_b
+J_{bc}J_a{}^d\mu_d-J_{ac}J_b{}^d\mu_d
-R_{ab}{}^d{}_c\mu_d\\
-2J_{ab}\sigma
\end{array}\!\right]\!.$$
\end{prop}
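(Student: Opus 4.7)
The plan is a direct componentwise calculation modelled on the Riemannian curvature computation presented immediately before the proposition. First, I would apply the connection~(\ref{K_tractors}) twice, regarding $\nabla_b\Sigma$ as a tractor-valued $1$-form on which $\nabla_a$ acts via the Levi-Civita connection on the index~$b$ and via the tractor rule on the tractor index. Writing the three slots of $\nabla_b\Sigma$ as $\nabla_b\sigma-\mu_b$, $\nabla_b\mu_c+g_{bc}\sigma+J_{bc}\rho$ and $\nabla_b\rho-J_b{}^c\mu_c$, the top slot of $\nabla_a\nabla_b\Sigma$ becomes $\nabla_a(\nabla_b\sigma-\mu_b)-(\nabla_b\mu_a+g_{ba}\sigma+J_{ba}\rho)$, the middle slot at the tractor index $c$ becomes $\nabla_a(\nabla_b\mu_c+g_{bc}\sigma+J_{bc}\rho)+g_{ac}(\nabla_b\sigma-\mu_b)+J_{ac}(\nabla_b\rho-J_b{}^d\mu_d)$, and the bottom slot becomes $\nabla_a(\nabla_b\rho-J_b{}^c\mu_c)-J_a{}^c(\nabla_b\mu_c+g_{bc}\sigma+J_{bc}\rho)$.

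Next I would antisymmetrise in $a,b$ and collect terms slot by slot. In the top slot the scalar commutator on $\sigma$ vanishes, the crossed $\nabla\mu$ terms cancel in pairs, $g_{ba}$ is symmetric and hence drops out, and only the skew contribution $J_{ba}\rho$ survives to give the asserted $2J_{ab}\rho$. In the bottom slot the $\rho$-commutator vanishes and the $J_a{}^c\nabla_b\mu_c$ terms cancel; using the compatibility identities $J_a{}^c g_{bc}=J_{ab}$ and $J_a{}^c J_{bc}=g_{ab}$, one sees that the resulting $g_{ab}\rho$-contribution is symmetric and drops out, while the $J_{ab}\sigma$-contribution is skew and produces $-2J_{ab}\sigma$. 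In the middle slot, the symmetric terms such as $g_{bc}\nabla_a\sigma$ and $J_{bc}\nabla_a\rho$ cancel against their $a\leftrightarrow b$ partners, and what remains is the $1$-form commutator $(\nabla_a\nabla_b-\nabla_b\nabla_a)\mu_c=-R_{ab}{}^d{}_c\mu_d$ together with the purely algebraic pieces $g_{bc}\mu_a-g_{ac}\mu_b+J_{bc}J_a{}^d\mu_d-J_{ac}J_b{}^d\mu_d$, exactly matching the middle entry of the stated formula.

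The main obstacle is purely bookkeeping: tracking signs and index placements, as several terms look superficially alike and differ only through the skew-symmetry of $J_{ab}$ or through the identities relating $g_{ab}$, $J_{ab}$ and $J_a{}^b$. No conceptual difficulty is anticipated; the calculation is mechanical and is of the same length and flavour as the Riemannian curvature computation already exhibited in the paper.
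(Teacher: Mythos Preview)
Your proposal is correct and follows exactly the same route as the paper: apply the connection~(\ref{K_tractors}) twice to obtain the three-slot expression for $\nabla_a\nabla_b\Sigma$ that you wrote down (which matches the paper's display verbatim), then antisymmetrise in $a,b$ and use $(\nabla_a\nabla_b-\nabla_b\nabla_a)\mu_c=-R_{ab}{}^d{}_c\mu_d$. Your slot-by-slot accounting of the cancellations is in fact more detailed than what the paper records, but the computation and its outcome are identical.
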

\begin{proof} 
We compute
$$\nabla_a\nabla_b\left[\!\begin{array}c\sigma\\ \mu_c\\ \rho
\end{array}\!\right]
=\left[\!\begin{array}c
\nabla_a(\nabla_b\sigma-\mu_b)-(\nabla_b\mu_a+g_{ba}\sigma+J_{ba}\rho)\\ 
\nabla_a(\nabla_b\mu_c+g_{bc}\sigma+J_{bc}\rho)+g_{ac}(\nabla_b\sigma-\mu_b)
+J_{ac}(\nabla_b\rho-J_b{}^d\mu_d)\\
\nabla_a(\nabla_b\rho-J_b{}^c\mu_c)
-J_a{}^c(\nabla_b\mu_c+g_{bc}\sigma+J_{bc}\rho)\end{array}\!\right]$$
so
$$(\nabla_a\nabla_b-\nabla_b\nabla_a)
\left[\!\begin{array}c\sigma\\ \mu_c\\ \rho\end{array}\!\right]
=\left[\!\begin{array}c2J_{ab}\rho\\ 
(\nabla_a\nabla_b-\nabla_b\nabla_a)\mu_c+g_{bc}\mu_a-g_{ac}\mu_b
+J_{bc}J_a{}^d\mu_d-J_{ac}J_b{}^d\mu_d\\
-2J_{ab}\sigma
\end{array}\!\right]\!,$$
which expands as above.
\end{proof}
\begin{cor}\label{K_curvature_on_cpn}
On\/ ${\mathbb{CP}}_n$, the curvature of the K\"ahlerian tractor
connection is given by
$$(\nabla_a\nabla_b-\nabla_b\nabla_a)
\left[\!\begin{array}{c}\sigma\\ \mu_c\\ \rho\end{array}\!\right]
=2J_{ab}\left[\!\begin{array}{c}
\rho\\ 
J_c{}^d\mu_d\\
-\sigma\end{array}\!\right].$$
\end{cor}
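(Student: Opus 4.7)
The plan is to substitute the explicit Riemann tensor (\ref{cpn_curvature}) into the general Kählerian tractor curvature formula from Proposition~\ref{K_curvature} and watch almost everything cancel. The first and third slots of the output, $2J_{ab}\rho$ and $-2J_{ab}\sigma$, already appear in the claimed form, so the whole content of the corollary lies in the middle slot: I need to verify that
\[g_{bc}\mu_a-g_{ac}\mu_b+J_{bc}J_a{}^d\mu_d-J_{ac}J_b{}^d\mu_d-R_{ab}{}^d{}_c\mu_d=2J_{ab}J_c{}^d\mu_d\]
on ${\mathbb{CP}}_n$.

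The single computational step is to raise the third index of $R_{abcd}$ term by term using (\ref{cpn_curvature}). The four $g$- and $J$-pairs become $\delta_a{}^d g_{bc}$, $-\delta_b{}^d g_{ac}$, $J_a{}^d J_{bc}$ and $-J_b{}^d J_{ac}$ respectively, while the final piece $2J_{ab}J_{cd}$ becomes $-2J_{ab}J_c{}^d$, the sign coming from the fact that $J$ is skew so that $g^{de}J_{ec}=-J_c{}^d$. Contracting with $\mu_d$, the first four terms of $R_{ab}{}^d{}_c\mu_d$ match exactly the first four terms on the left of the displayed identity, so they cancel and only $2J_{ab}J_c{}^d\mu_d$ remains, as required.

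There is no real obstacle here beyond careful sign bookkeeping when raising an index on the skew tensor $J_{\alpha\beta}$. Conceptually, the clean collapse is the point of the statement: the curvature (\ref{cpn_curvature}) was arranged so that, apart from the Kähler-form piece $2J_{ab}J_{cd}$, every term of $R_{abcd}$ is precisely absorbed by the $g$- and $J$-corrections built into the tractor connection (\ref{K_tractors}). The surviving $J_{ab}$-multiple is the local reflection of the symplectic flatness of the Kählerian tractor connection on spaces of constant holomorphic sectional curvature, following~\cite{ES}, and is what will later permit the identification of Killing tensors in Theorem~\ref{killing_tensors_on_cpn}.
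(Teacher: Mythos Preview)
Your proof is correct and follows exactly the route the paper indicates: the paper's own proof is the single sentence ``Immediate from Proposition~\ref{K_curvature} and~(\ref{cpn_curvature}),'' and you have simply made that substitution explicit, with the sign bookkeeping for $g^{de}J_{ec}=-J_c{}^d$ handled correctly.
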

\begin{proof} Immediate from Proposition~\ref{K_curvature}
and~(\ref{cpn_curvature}).  \end{proof}
	
This corollary says that
$(\nabla_a\nabla_b-\nabla_b\nabla_a)\Sigma=2J_{ab}\Theta\Sigma$, for a certain
endomorphism $\Theta$ of ${\mathcal{T}}$ and, with the terminology
of~\cite{ES}, this connection is said to be {\em symplectically flat\/}.  In
general, the K\"ahlerian tractor connection has several other nice properties:
\begin{prop}\label{nice_properties}
The tractor connection \eqref{K_tractors} respects
\begin{itemize}
\item the metric
\begin{equation}\label{K_tractor_metric}
g^{\beta\gamma}\Sigma_\beta\tilde\Sigma_\gamma\equiv
\left(\left[\!\begin{array}c\sigma\\ \mu_b\\ \rho\end{array}\!\right],
\left[\!\begin{array}c\tilde\sigma\\ \tilde\mu_c\\ 
\tilde\rho\end{array}\!\right]\right)
\equiv\sigma\tilde\sigma+g^{bc}\mu_b\tilde\mu_c+\rho\tilde\rho\,,
\end{equation}
\item the non-degenerate skew form 
\begin{equation}\label{K_tractor_symplectic_form}
J^{\beta\gamma}\Sigma_\beta\tilde\Sigma_\gamma\equiv
\left\langle\left[\!\begin{array}c\sigma\\ \mu_b\\ \rho\end{array}\!\right],
\left[\!\begin{array}c\tilde\sigma\\ \tilde\mu_c\\ 
\tilde\rho\end{array}\!\right]\right\rangle\equiv
\sigma\tilde\rho+J^{bc}\mu_b\tilde\mu_c-\rho\tilde\sigma.\end{equation}
\end{itemize}
\end{prop}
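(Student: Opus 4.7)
The plan is to verify, for each of the two pairings, the Leibniz identity
\[
\nabla_a\bigl(g^{\beta\gamma}\Sigma_\beta\tilde\Sigma_\gamma\bigr)
=g^{\beta\gamma}(\nabla_a\Sigma_\beta)\tilde\Sigma_\gamma
+g^{\beta\gamma}\Sigma_\beta(\nabla_a\tilde\Sigma_\gamma),
\]
and its analogue for $J^{\beta\gamma}$, by direct expansion using the connection formula~(\ref{K_tractors}).  Because the Levi-Civita connection on a K\"ahler manifold satisfies $\nabla_ag_{bc}=0$ and $\nabla_aJ_{bc}=0$, I expect the ordinary derivative contributions on the two sides to match automatically; the real content will be to show that the algebraic couplings arising from the off-diagonal terms $-\mu_a$, $g_{ac}\sigma+J_{ac}\rho$, and $-J_a{}^c\mu_c$ of~(\ref{K_tractors}) cancel in pairs against the corresponding terms coming from~$\tilde\Sigma$.

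For the metric pairing I expect the cancellations to be immediate.  The term $-\mu_a\tilde\sigma$ from the top slot of $\nabla_a\Sigma$ should cancel the $+\mu_a\tilde\sigma$ produced when $g_{ac}\tilde\sigma$ in the middle slot of $\nabla_a\tilde\Sigma$ is contracted through~$g^{bc}$; the $J_{ac}\tilde\rho$ contribution in the middle slot should produce a term killing the $-J_a{}^c\mu_c\tilde\rho$ from the bottom slot (and likewise with $\Sigma\leftrightarrow\tilde\Sigma$).  These cancellations should need nothing beyond $g_{ac}g^{bc}=\delta_a^b$ and the antisymmetry relation $J^b{}_a=-J_a{}^b$.

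For the symplectic pairing I anticipate a structurally identical computation, but twisted by~$J^{bc}$ in the middle-slot pairing and with $\sigma\tilde\rho-\rho\tilde\sigma$ in place of $\sigma\tilde\sigma+\rho\tilde\rho$.  The new algebraic input required will be the contraction identity $J^{bc}J_{ab}=-\delta_a^c$, which restates, via $J_{ab}=J_a{}^cg_{bc}$, the defining relation $J_a{}^bJ_b{}^c=-\delta_a^c$.  The main obstacle in the whole argument will be bookkeeping: roughly a dozen cross-terms must be tracked and shown to pair off, with delicate signs arising from the antisymmetry of~$J$.  No geometric hypothesis beyond the K\"ahler compatibility among $g_{ab}$, $J_{ab}$, and $J_a{}^b$ should be required; in particular, neither constant holomorphic sectional curvature nor the symplectic flatness of Corollary~\ref{K_curvature_on_cpn} should enter, so the proposition ought to hold for the tractor connection~(\ref{K_tractors}) on any K\"ahler manifold.
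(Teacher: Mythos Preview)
Your plan is correct and matches the paper's approach exactly: the paper simply asserts the Leibniz identity for the metric pairing (displaying the equation you propose to verify and leaving the cancellations to the reader) and, for the skew form, refers to \cite[Proposition~1]{ES}, which is again a direct computation of the same kind. Your outline of the pairwise cancellations and the observation that only the K\"ahler identities $\nabla_ag_{bc}=0$, $\nabla_aJ_{bc}=0$, and $J_a{}^bJ_b{}^c=-\delta_a{}^c$ are needed is accurate and in fact more explicit than what the paper records.
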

\begin{proof} We calculate that, for example,
$$\left(\nabla_a\left[\!\begin{array}c\sigma\\ \mu_b\\ \rho
\end{array}\!\right],
\left[\!\begin{array}c\tilde\sigma\\ \tilde\mu_c\\ \tilde\rho 
\end{array}\!\right]\right)
+\left(\left[\!\begin{array}c\sigma\\ \mu_b\\ \rho\end{array}\!\right],
\nabla_a\left[\!\begin{array}c\tilde\sigma\\ \tilde\mu_c\\ \tilde\rho 
\end{array}\!\right]\right)
=\nabla_a(\sigma\tilde\sigma+g^{bc}\mu_b\tilde\mu_c+\rho\tilde\rho).$$
(Preservation of the skew form (\ref{K_tractor_symplectic_form}) is a 
special case of~\cite[Proposition~1]{ES}.)
\end{proof}
The endomorphism $\Theta$, implicit in Corollary~\ref{K_curvature_on_cpn} can 
now be written as 
$$\Sigma_\alpha\mapsto J_\alpha{}^\beta\Sigma_\beta,\quad\mbox{where}\enskip
J_{\alpha\beta}=J_\alpha{}^\gamma g_{\beta\gamma}$$
so this Corollary now says that
\begin{equation}\label{K_curvature_with_indices}
(\nabla_a\nabla_b-\nabla_b\nabla_a)\Sigma_\alpha
=2J_{ab}J_\alpha{}^\beta\Sigma_\beta,\end{equation}
whilst Proposition~\ref{nice_properties} implies that
\begin{equation}\label{all_gadgets_are_parallel}
\nabla_ag_{\alpha\beta}=0,\quad 
\nabla_aJ_{\alpha\beta}=0,\quad\mbox{and}\enskip
\nabla_aJ_\alpha{}^\beta=0.\end{equation}
We are now in a position to obtain some real information from the
K\"ahlerian tractor connection~(\ref{K_tractors}), albeit rather trivial in the
first instance:
\begin{prop}
The differential equation
$$\nabla_a\nabla_b\sigma+g_{ab}\sigma=0$$
has no local solutions on~${\mathbb{CP}}_n$. 
\end{prop}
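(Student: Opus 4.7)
The plan is to show that the equation $\nabla_a\nabla_b\sigma+g_{ab}\sigma=0$ on $\mathbb{CP}_n$ admits only the trivial solution $\sigma\equiv 0$. From the tractor viewpoint this is unsurprising: the genuine K\"ahler prolongation~\eqref{K_tractors} requires an auxiliary scalar $\rho$ coupled through the K\"ahler form, and forbidding this coupling (that is, insisting $\rho=0$) over-determines the system against the curvature~\eqref{cpn_curvature}.

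Concretely, set $\mu_b\equiv\nabla_b\sigma$, so the equation reads $\nabla_a\mu_b=-g_{ab}\sigma$. Differentiating once more and antisymmetrising in $a,b$ yields, by direct computation, $(\nabla_a\nabla_b-\nabla_b\nabla_a)\mu_c=-g_{bc}\mu_a+g_{ac}\mu_b$, whilst the Ricci identity gives $-R_{ab}{}^d{}_c\mu_d$. Substituting~\eqref{cpn_curvature}, the ``spherical'' part $g_{ac}g_{bd}-g_{bc}g_{ad}$ of the curvature accounts precisely for the left hand side, leaving only the K\"ahler contributions; setting $\nu_x\equiv J_x{}^d\mu_d$, one arrives at the purely algebraic identity
\[J_{bc}\nu_a-J_{ac}\nu_b-2J_{ab}\nu_c=0.\]

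Contracting this with $J^{ab}$ and using the elementary identities $J^{ab}J_{ab}=2n$, $J^{ab}J_{ac}=\delta^b_c$, $J^{ab}J_{bc}=-\delta^a_c$ gives $-(4n+2)\nu_c=0$ and hence $\nu_c=0$. Since the complex structure is an invertible endomorphism, $\mu_d=0$, so $\sigma$ is locally constant; substituting back into the original equation yields $g_{ab}\sigma=0$ and hence $\sigma=0$. The only real obstacle in writing this out cleanly is the careful bookkeeping of raising/lowering conventions for $J$ (notably the sign in $J^d{}_c=-J_c{}^d$) when expanding the mixed-index form of the Fubini--Study curvature; no deeper idea is required.
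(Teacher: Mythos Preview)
Your argument is correct and its computational core coincides with the paper's: both derive the integrability condition
\[
J_{bc}\nu_a-J_{ac}\nu_b-2J_{ab}\nu_c=0,\qquad \nu_a\equiv J_a{}^d\mu_d,
\]
(up to an overall sign) and contract with $J^{ab}$ to force $\nu_c=0$. The difference is one of packaging. The paper first interprets the equation as asking for covariant constant sections of the K\"ahlerian tractor bundle~\eqref{K_tractors}, uses the above computation to confirm that the auxiliary component $\rho$ is forced to vanish, and then invokes the tractor curvature formula~\eqref{K_curvature_with_indices} together with invertibility of $J_\alpha{}^\beta$ on $\mathcal{T}$ to conclude that any parallel section is zero. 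You bypass the tractor apparatus entirely: once $\nu_c=0$, invertibility of $J_a{}^b$ on the tangent bundle gives $\mu_b=0$ directly, and back-substitution finishes the job. Your route is more elementary and self-contained; the paper's route, while slightly more circuitous here, is the one that scales to the higher-rank bundles needed for Theorem~\ref{killing_tensors_on_cpn}.
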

\begin{proof} As a mild variation on Proposition~\ref{R-prolong}, firstly let
us observe that the connection (\ref{K_tractors}) is also a prolongation
connection for the operator $\sigma\mapsto\nabla_a\nabla_b\sigma+g_{ab}\sigma$.
At first, this may seem like an error because it is evident from the second 
line of (\ref{K_tractors}) that, for a covariant constant section of 
${\mathcal{T}}$, the component $\rho$ must vanish.  But this is, in fact, 
perfectly consistent because
$$\begin{array}{rcl}\nabla_b\mu_c+g_{bc}\sigma=0&\Rightarrow&
(\nabla_a\nabla_b-\nabla_b\nabla_a)\mu_c
+g_{bc}\nabla_a\sigma-g_{ac}\nabla_b\sigma=0\\
&\Rightarrow&{}-R_{ab}{}^d{}_c\mu_d
+g_{bc}\mu_a-g_{ac}\mu_b=0\\
&\Rightarrow&{}J_{ac}J_b{}^d\mu_d-J_{bc}J_a{}^d\mu_d+2J_{ab}J_c{}^d\mu_d=0\\
\end{array}$$
and contracting this last equation with $J^{ab}$ gives $J_c{}^d\mu_d=0$, which 
is what we find on the last line of~(\ref{K_tractors}).  So, now we are 
required to show that ${\mathcal{T}}$ has no covariant constant sections. If 
$\Sigma_\alpha$ were such a section, then 
$$0=(\nabla_a\nabla_b-\nabla_b\nabla_a)\Sigma_\alpha
=2J_{ab}J_\alpha{}^\beta\Sigma_\beta$$
and we conclude that $J_\alpha{}^\beta\Sigma_\beta=0$. But 
$J_\eta{}^\alpha J_\alpha{}^\beta=-\delta_\eta{}^\beta$ 
so~$\Sigma_\beta=0$, as required. 
\end{proof}
This may seem like a rather convoluted route to a relatively simple conclusion
but the corresponding output for the bundle
$$\Wedge_\perp^2{\mathcal{T}}
\equiv\{\Sigma_{\alpha\beta}\in\Wedge^2{\mathcal{T}}\mid 
J^{\alpha\beta}\Sigma_{\alpha\beta}=0\}$$ 
is slightly more significant (cf.~Proposition~\ref{killing_fields_prolonged}):
\begin{prop}\label{more_significant}
On ${\mathbb{CP}}_n$ with its Fubini-Study metric, the induced tractor connection on
$\Wedge_\perp^2{\mathcal{T}}=\Wedge^1\oplus\Wedge^2\oplus\Wedge^1$ is a
prolongation connection for Killing fields.
\end{prop}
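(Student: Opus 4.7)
The plan is to mirror the proof of Proposition~\ref{killing_fields_prolonged} from the sphere, but using the K\"ahlerian tractor connection~(\ref{K_tractors}). I would first decompose $\Wedge^2\mathcal{T}$ by the bidegree of slots: writing $\mathcal{T}=\Wedge^0\oplus\Wedge^1\oplus\Wedge^0$ with slots numbered $0,1,2$, the four associated-graded pieces of $\Wedge^2\mathcal{T}$ are $\Wedge^1,\Wedge^0,\Wedge^2,\Wedge^1$ of bidegrees $(0,1),(0,2),(1,1),(1,2)$ respectively. Applying the Leibniz rule to $\Sigma_{[\alpha}\tilde\Sigma_{\beta]}$ using~(\ref{K_tractors}) then yields an explicit formula for the induced connection, with off-diagonal pieces assembled from $g_{ab}$, $J_{ab}$, and $J_a{}^b$. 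The subbundle $\Wedge^2_\perp\mathcal{T}$ is parallel because $J^{\alpha\beta}$ is parallel by~(\ref{all_gadgets_are_parallel}); writing out $J^{\alpha\beta}\Sigma_{\alpha\beta}=0$ identifies the scalar of bidegree $(0,2)$ with (a multiple of) the $J$-trace of the $\Wedge^2$-piece, reducing $\Wedge^2_\perp\mathcal{T}$ to the three slots $\Wedge^1\oplus\Wedge^2\oplus\Wedge^1$ as in the statement.

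Writing a section of $\Wedge^2_\perp\mathcal{T}$ as $(\sigma_c,\mu_{cd},\nu_c)$, a short computation (using only the $(0,1)$-component of the Leibniz rule on $\Sigma_{[\alpha}\tilde\Sigma_{\beta]}$ and the substitution for the scalar slot) shows the top row of the induced connection has the form $\nabla_a\sigma_c-\mu_{ac}-\tfrac12 J_{ac}J^{bd}\mu_{bd}$. Since $\mu_{ac}$ and $J_{ac}$ are both antisymmetric in $a,c$, vanishing of this expression forces $\nabla_{(a}\sigma_{c)}=0$, so $\sigma_c$ is a Killing field. The canonical projection onto this top slot is the surjection required of a prolongation connection. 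For the converse, given a Killing field $\sigma_c$, I would set $\mu_{cd}=\nabla_c\sigma_d$ (automatically skew, by the Killing equation) as in Proposition~\ref{prolongation}, and then read off $\nu_c$ from the third row of the induced connection.

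The main obstacle is verifying that the resulting section is covariant constant in the middle slot. On a general Riemannian manifold the Killing connection~(\ref{the_killing_connection}) is not flat, and the failure of $\nabla_a\mu_{bc}$ to match $R_{bc}{}^d{}_a\sigma_d$ beyond the computation~(\ref{closure}) is captured by~(\ref{mu_is_type_one_one}); on ${\mathbb{CP}}_n$ this obstruction is nonzero precisely because of the K\"ahler contribution $\xi_{ab}=J_{[a}{}^c\mu_{b]c}$. The role of the extra slot $\nu_c$ and of the additional $J_{ab}$- and $J_a{}^b$-terms in~(\ref{K_tractors}) is to absorb exactly this obstruction: the Fubini-Study curvature~(\ref{cpn_curvature}) must match the algebraic structure of~(\ref{K_tractors}) once $\nu_c$ is chosen as the $J$-trace-type expression dictated by the third row. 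This is where the specific normalisation of the Fubini-Study metric in~(\ref{cpn_curvature}) is crucial. Once the cancellation is checked, the isomorphism between parallel sections of $\Wedge^2_\perp\mathcal{T}$ and Killing fields is automatic, with a rank count reproducing the known dimension $n(n+2)$ of $\mathfrak{su}(n+1)$.
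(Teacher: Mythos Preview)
Your overall strategy matches the paper's: write the induced connection on $\Wedge_\perp^2\mathcal{T}$ explicitly, read off the Killing equation from the top slot, and for the converse build the remaining slots from a given Killing field and verify parallelism. Two points, however, need correction.

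First, your choice $\mu_{cd}=\nabla_c\sigma_d$ is inconsistent with your own top-row formula. If the top row is $\nabla_a\sigma_c-\mu_{ac}-\tfrac12 J_{ac}J^{bd}\mu_{bd}$, then vanishing of this row does \emph{not} give $\mu_{ac}=\nabla_a\sigma_c$; the $J$-trace term survives (for a generic Killing field on $\mathbb{CP}_n$ one has $J^{bd}\nabla_b\sigma_d\neq0$, already on $\mathbb{CP}_1$). The paper, quoting~\cite[Equation~(31)]{EG}, uses a parametrisation in which the top row is simply $\nabla_a\sigma_b-\mu_{ab}$, so that $\mu_{ab}=\nabla_a\sigma_b$ is legitimate. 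If you stick with your parametrisation you must instead solve for $\mu_{ac}$ from the top row, picking up a $J_{ac}$-correction.

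Second, your account of the ``obstruction'' is garbled. The identity~(\ref{closure}) gives $\nabla_a\mu_{bc}=R_{bc}{}^d{}_a\sigma_d$ exactly; there is no failure to match. The paper's route is: expand this via~(\ref{cpn_curvature}) and recognise it as the middle row of~(\ref{looks_like_this}) with $\rho_b\equiv -J_b{}^d\sigma_d$ read off from the second line, not the third. The genuine check is then the \emph{third} row, which reduces to $2J_{[a}{}^c\mu_{b]c}=2\xi_{ab}$. This vanishes not because $\nu_c$ absorbs it, but because $(\sigma_c,\mu_{cd})$ is already parallel for the Killing connection~(\ref{the_killing_connection}), whose curvature~(\ref{mu_is_type_one_one}) forces $\xi_{ab}=0$. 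You have the roles of the second and third rows swapped and mislocate where $\xi_{ab}$ enters.
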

\begin{proof} As noted in~\cite[Equation~(31)]{EG}, this connection looks like
this:
\begin{equation}\label{looks_like_this}
\nabla_a\left[\!\begin{array}{c}\sigma_b\\ \mu_{bc}\\
\rho_b\end{array}\!\right]
=\left[\!\begin{array}{c}\nabla_a\sigma_b-\mu_{ab}\\ 
\nabla_a\mu_{bc}+g_{ab}\sigma_c-g_{ac}\sigma_b+J_{bc}J_a{}^d\sigma_d
+J_{ab}\rho_c-J_{ac}\rho_b-J_{bc}\rho_a\\
\nabla_a\rho_b+J_a{}^c\mu_{bc}
\end{array}\!\right],\end{equation}
and it is evident from its first line that covariant constancy 
forces $\nabla_{(a}\sigma_{b)}=0$.  Therefore, we are required to show that the
remaining lines contribute no additional constraints on~$\sigma_b$.  It is
immediate from $\nabla_a\sigma_b=\mu_{ab}$, that the $2$-form $\mu_{ab}$ is
closed and we already used this in (\ref{closure}) determine 
$\nabla_a\mu_{bc}$ and conclude that the system has
closed. We have arrived at a perfectly fine prolongation
connection~(\ref{the_killing_connection}), as noted in the proof of
Proposition~\ref{prolongation}.  For the Fubini-Study metric,
however, we may use (\ref{cpn_curvature}) to expand (\ref{closure}) as
$$\nabla_a\mu_{bc}+g_{ab}\sigma_c-g_{ac}\sigma_b+J_{bc}J_a{}^d\sigma_d=
J_{ab}J_c{}^d\sigma_d-J_{ac}J_b{}^d\sigma_d-J_{bc}J_a{}^d\sigma_d$$
and `inadvertently' introduce $\rho_b\equiv-J_b{}^d\sigma_d$, obtaining the
second line of (\ref{looks_like_this}) rather than immediate closure.  But now
$\nabla_a\rho_b+J_a{}^c\mu_{bc}=-\nabla_a(J_b{}^c\sigma_c)+J_a{}^c\mu_{bc}
=2J_{[a}{}^c\mu_{b]c}$ and we have already observed (\ref{mu_is_type_one_one})
that $J_{[a}{}^c\mu_{b]c}=0$ as a consequence of the curvature of
(\ref{the_killing_connection}) on~${\mathbb{CP}}_n$.  The last line of
(\ref{looks_like_this}) therefore contributes nothing new.
\end{proof}
\begin{cor}
The space of Killing fields on~${\mathbb{CP}}_n$ with its Fubini-Study metric
may be identified with the covariant constant sections of the parallel flat
subbundle
\begin{equation}\label{flat_and_parallel}\{\Sigma_{\alpha\beta}\mid 
J_\alpha{}^\gamma\Sigma_{\gamma\beta}+J_\beta{}^\gamma\Sigma_{\alpha\gamma}=0
\}\subset\Wedge_\perp^2{\mathcal{T}}.\end{equation}
\end{cor}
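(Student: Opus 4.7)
The plan is to deduce the corollary from Proposition~\ref{more_significant} by establishing three facts about the subbundle~(\ref{flat_and_parallel}): it is parallel, the induced connection on it is flat, and every covariant constant section of $\Wedge_\perp^2{\mathcal{T}}$ automatically lies in it. The third fact is what guarantees that nothing is lost when one refines the bundle from all of $\Wedge_\perp^2{\mathcal{T}}$ to the subbundle.

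Parallelism is immediate from~(\ref{all_gadgets_are_parallel}). Since $\nabla_aJ_\alpha{}^\beta=0$, the defining linear condition $J_\alpha{}^\gamma\Sigma_{\gamma\beta}+J_\beta{}^\gamma\Sigma_{\alpha\gamma}=0$ is preserved under $\nabla_a$, so the tractor connection on $\Wedge_\perp^2{\mathcal{T}}$ restricts to a connection on the subbundle.

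For the remaining two facts I would extend the curvature formula~(\ref{K_curvature_with_indices}) to $\Wedge^2{\mathcal{T}}$ as a derivation, obtaining
$$(\nabla_a\nabla_b-\nabla_b\nabla_a)\Sigma_{\alpha\beta}
=2J_{ab}\bigl(J_\alpha{}^\gamma\Sigma_{\gamma\beta}+J_\beta{}^\gamma\Sigma_{\alpha\gamma}\bigr).$$
The right-hand side vanishes on the subbundle, giving flatness. Read the other way round, for any covariant constant section $\Sigma_{\alpha\beta}$ of $\Wedge_\perp^2{\mathcal{T}}$ the left side is zero; since the K\"ahler form $J_{ab}$ is nowhere zero on~${\mathbb{CP}}_n$, we may conclude $J_\alpha{}^\gamma\Sigma_{\gamma\beta}+J_\beta{}^\gamma\Sigma_{\alpha\gamma}=0$ pointwise, so $\Sigma$ lies in the subbundle. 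Hence the parallel sections of $\Wedge_\perp^2{\mathcal{T}}$ and of the subbundle coincide, and Proposition~\ref{more_significant} then identifies both with the Killing fields.

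The one conceptual step, and the only place where something might trip one up, is the observation that a single curvature identity does both jobs: it restricts to zero on the subbundle, and it simultaneously forces parallel sections to sit inside the subbundle. Once this is noticed, the remainder is pure bookkeeping with the parallelism of $J_\alpha{}^\beta$ already recorded in~(\ref{all_gadgets_are_parallel}); no explicit manipulation of the components $(\sigma_b,\nabla_{[b}\sigma_{c]},-J_b{}^d\sigma_d)$ from the proof of Proposition~\ref{more_significant} is required.
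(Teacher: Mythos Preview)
Your proposal is correct and follows essentially the same route as the paper: extend~(\ref{K_curvature_with_indices}) to $\Wedge^2{\mathcal{T}}$ as a derivation, read the resulting identity both ways to obtain flatness of the subbundle and the containment of covariant constant sections, and invoke $\nabla_aJ_\alpha{}^\beta=0$ for parallelism. The paper is somewhat terser (it does not spell out the flatness or the ``$J_{ab}$ is nowhere zero'' step), but the argument is the same.
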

\begin{proof} According to Proposition~\ref{more_significant}, we are required
to identify the covariant constant sections of~$\Wedge_\perp^2{\mathcal{T}}$.
Well, if $\nabla_a\Sigma_{\alpha\beta}=0$, then
$(\nabla_a\nabla_b-\nabla_b\nabla_a)\Sigma_{\alpha\beta}=0$.  But
(\ref{K_curvature_with_indices}) gives
$$(\nabla_a\nabla_b-\nabla_b\nabla_a)\Sigma_{\alpha\beta}=
2J_{ab}(J_\alpha{}^\gamma\Sigma_{\gamma\beta}
+J_\beta{}^\gamma\Sigma_{\beta\gamma})$$
so $\Sigma_{\alpha\beta}$ is a section of the
subbundle~(\ref{flat_and_parallel}).  This subbundle is parallel because
$\nabla_aJ_\alpha{}^\beta=0$ in accordance
with~(\ref{all_gadgets_are_parallel}).
\end{proof}
This corollary coincides with Theorem~\ref{killing_tensors_on_cpn} when~$k=1$.
One subtlety to bear in mind, however, is that the space ${\mathbb{T}}$
appearing in Theorem~\ref{killing_tensors_on_cpn} is not canonical but rather
should be realised as the fibre of the K\"ahlerian tractor bundle
${\mathcal{T}}$ at a chosen basepoint in~${\mathbb{CP}}_n$.  Since the
connection on this bundle is only symplectically flat rather than flat, this
realisation very much depends on this choice.

In any case, to prove Theorem~\ref{killing_tensors_on_cpn} it remains to 
establish the following.
\begin{prop} On ${\mathbb{CP}}_n$ with its Fubini-Study metric, the induced 
K\"ahlerian tractor connection on 
$$\begin{picture}(92,24)(0,-11)
\put(0,0){\line(0,1){20}}
\put(10,0){\line(0,1){20}}
\put(20,0){\line(0,1){20}}
\put(50,0){\line(0,1){20}}
\put(60,0){\line(0,1){20}}
\put(0,0){\line(1,0){60}}
\put(0,10){\line(1,0){60}}
\put(0,20){\line(1,0){60}}
\put(36,4.5){\makebox(0,0){$\cdots$}}
\put(36,14.5){\makebox(0,0){$\cdots$}}
\put(65,2){\makebox(0,0){$\perp$}}
\put(77,10){\makebox(0,0){\large$({\mathcal{T}})$}}
\put(30,-9.5){\makebox(0,0){$\underbrace{\hspace{58pt}}_k$}}
\end{picture}\raisebox{18pt}{$=\bigodot^k\!\Wedge^1\oplus\cdots$}$$
is a prolongation connection for the rank~$k$ Killing tensors.
\end{prop}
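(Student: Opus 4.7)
The plan is to mimic the rank one argument of Proposition~\ref{more_significant} in two stages, with the new ingredient being a symplectic analogue of the BGG prolongation machinery invoked on the round sphere.

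First, I would write down the decomposition of the two-row Young-tableau tractor bundle. Substituting $\mathcal{T}=\Wedge^0\oplus\Wedge^1\oplus\Wedge^0$ into the Young projector and imposing $J^{\alpha\beta}$-trace-freeness, the top summand (all $2k$ boxes drawn from the middle $\Wedge^1$ slot) is precisely $\bigodot^k\!\Wedge^1$, while the further summands involve progressively fewer $\Wedge^1$ boxes paired with scalar components supplied by the two $\Wedge^0$ slots of $\mathcal{T}$. This is the higher rank extension of the decomposition $\Wedge_\perp^2\mathcal{T}=\Wedge^1\oplus\Wedge^2\oplus\Wedge^1$ used for Killing fields. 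A direct book-keeping calculation, extending the one leading to~(\ref{looks_like_this}), then shows that the first line of the induced K\"ahlerian tractor connection reads
$$\nabla_a\sigma_{bc\cdots d}-\mu_{a(bc\cdots d)}=0,$$
where $\mu$ denotes the next tractor slot; hence covariant constancy of $\Sigma$ immediately forces $\nabla_{(a}\sigma_{bc\cdots d)}=0$, i.e.\ $\sigma$ is a Killing tensor of rank~$k$.

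Second, I would prove the converse: given a Killing tensor $\sigma$, the lower tractor components can be filled in consistently, so that no further constraints are imposed. The recipe is a symplectic counterpart of the sphere-side BGG statement~\cite[Theorem~2.1]{BCEG}: define each lower component inductively as the appropriate $J$-trace-free, Young-symmetrised projection of iterated covariant derivatives of~$\sigma$, and verify that the prescribed connection formulae are automatically satisfied. The key inputs are that on ${\mathbb{CP}}_n$ the K\"ahlerian tractor connection is symplectically flat, with curvature~(\ref{K_curvature_with_indices}) proportional to $J_{ab}J_\alpha{}^\beta$, while the tractor invariants $g_{\alpha\beta}$, $J_{\alpha\beta}$, $J_\alpha{}^\beta$ are parallel by~(\ref{all_gadgets_are_parallel}).

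Here lies the main obstacle. At each inductive step one must expand a commutator $[\nabla_a,\nabla_b]$ using the Fubini-Study curvature~(\ref{cpn_curvature}): the purely Riemannian part $g_{ac}g_{bd}-g_{bc}g_{ad}$ is absorbed into the connection formulae in the same way as on the sphere, whereas the K\"ahlerian correction $J_{ac}J_{bd}-J_{bc}J_{ad}+2J_{ab}J_{cd}$ must be absorbed by the $J$-trace-free/Young projection defining the bundle. This is the direct generalisation of the rank one phenomenon, where introducing $\rho_b=-J_b{}^d\sigma_d$ cancelled the K\"ahlerian curvature correction and the constraint $J_{[a}{}^c\mu_{b]c}=0$ from~(\ref{mu_is_type_one_one}) ensured closure; for general $k$ an analogous algebraic matching will propagate through the entire resolution, and delivering that matching cleanly, rather than grinding through the components one rank at a time, is the delicate point.
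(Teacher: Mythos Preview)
Your two-stage plan has the right shape, and the first stage---that covariant constancy forces the Killing equation in the top slot---is straightforward and matches what the paper does implicitly. The gap is in the second stage: you correctly identify that the converse requires the K\"ahlerian curvature corrections to be absorbed at every level of the inductive filling-in, but you do not actually carry this out, and you flag it yourself as ``the delicate point.'' A direct inductive verification along the lines you sketch would be laborious for general $k$, and your proposal stops exactly where the real work begins.

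The paper's proof is much shorter because it does not attempt this by hand: it simply invokes \cite[Theorem~3]{ES}, which is precisely the ``symplectic counterpart of the sphere-side BGG statement'' you say you want. That theorem establishes, in one stroke, that on a symplectically flat geometry the induced tractor connection on the relevant tractor bundle is a prolongation connection for the corresponding first BGG operator, with the underlying Lie-algebra cohomology being $H^r(\mathfrak{h},\mathbb{V})$ for the $(2n+1)$-dimensional Heisenberg algebra $\mathfrak{h}$ in place of the Abelian $\mathfrak{g}_{-1}$ used on the sphere. So the machinery you are proposing to rebuild already exists in the literature; the paper's argument is just to set the highest weight to $(0,k,0,\ldots,0)$ and cite it (with \cite[\S5]{EG} as further background). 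Your sketch is not wrong in spirit, but it would amount to re-proving a special case of that general theorem rather than using it.
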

\begin{proof} This is implicit in~\cite[\S5]{EG} and an explicit consequence
of~\cite[Theorem~3]{ES} by setting
$$\begin{picture}(130,14)(0,-2)
\put(5,2){\makebox(0,0){$\bullet$}}
\put(25,2){\makebox(0,0){$\bullet$}}
\put(45,2){\makebox(0,0){$\bullet$}}
\put(65,2){\makebox(0,0){$\bullet$}}
\put(85,2){\makebox(0,0){$\cdots$}}
\put(105,2){\makebox(0,0){$\bullet$}}
\put(125,2){\makebox(0,0){$\bullet$}}
\put(5,2){\line(1,0){70}}
\put(95,2){\line(1,0){10}}
\put(105,1){\line(1,0){20}}
\put(105,3){\line(1,0){20}}
\put(115,2){\makebox(0,0){$\langle$}}
\put(5,10){\makebox(0,0){$\scriptstyle a$}}
\put(25,10){\makebox(0,0){$\scriptstyle b$}}
\put(45,10){\makebox(0,0){$\scriptstyle c$}}
\put(65,10){\makebox(0,0){$\scriptstyle d$}}
\put(105,10){\makebox(0,0){$\scriptstyle e$}}
\put(125,10){\makebox(0,0){$\scriptstyle f$}}
\end{picture}
=\begin{picture}(130,14)(0,-2)
\put(5,2){\makebox(0,0){$\bullet$}}
\put(25,2){\makebox(0,0){$\bullet$}}
\put(45,2){\makebox(0,0){$\bullet$}}
\put(65,2){\makebox(0,0){$\bullet$}}
\put(85,2){\makebox(0,0){$\cdots$}}
\put(105,2){\makebox(0,0){$\bullet$}}
\put(125,2){\makebox(0,0){$\bullet$}}
\put(5,2){\line(1,0){70}}
\put(95,2){\line(1,0){10}}
\put(105,1){\line(1,0){20}}
\put(105,3){\line(1,0){20}}
\put(115,2){\makebox(0,0){$\langle$}}
\put(5,10){\makebox(0,0){$\scriptstyle 0$}}
\put(25,10){\makebox(0,0){$\scriptstyle k$}}
\put(45,10){\makebox(0,0){$\scriptstyle 0$}}
\put(65,10){\makebox(0,0){$\scriptstyle 0$}}
\put(105,10){\makebox(0,0){$\scriptstyle 0$}}
\put(125,10){\makebox(0,0){$\scriptstyle 0$}}
\end{picture}.$$
In short, the BGG machinery for $S^n$ entails the Lie algebra cohomology
$H^r({\mathfrak{g}}_{-1},{\mathbb{V}})$ where ${\mathfrak{g}}_{-1}$ is Abelian
whilst the BGG machinery for ${\mathbb{CP}}_n$ entails the Lie algebra 
cohomology $H^r({\mathfrak{h}},{\mathbb{V}})$ where ${\mathfrak{h}}$ is the 
$(2n+1)$-dimensional Heisenberg algebra.
\end{proof}

\section{Dimension formul{\ae}}
In this section, we derive formul{\ae} for the dimensions of the spaces in
Theorem~\ref{killing_tensors_on_cpn}.  This is a question of representation
theory of the appropriate groups, namely ${\mathrm{SL}}(2n+2,{\mathbb{R}})$ and
its subgroups 
$${\mathrm{SU}}(n+1)
={\mathrm{SL}}(n+1,{\mathbb{C}})\cap{\mathrm{Sp}}(2n+2,{\mathbb{R}})$$
acting on the irreducible representation 
$$\begin{picture}(85,23)(0,-8)
\put(0,0){\line(0,1){20}}
\put(10,0){\line(0,1){20}}
\put(20,0){\line(0,1){20}}
\put(50,0){\line(0,1){20}}
\put(60,0){\line(0,1){20}}
\put(0,0){\line(1,0){60}}
\put(0,10){\line(1,0){60}}
\put(0,20){\line(1,0){60}}
\put(36,4.5){\makebox(0,0){$\cdots$}}
\put(36,14.5){\makebox(0,0){$\cdots$}}
\put(72,10){\makebox(0,0){\large$({\mathbb{T}})$}}
\put(30,-9.5){\makebox(0,0){$\underbrace{\hspace{58pt}}_k$}}
\end{picture}$$
in the first instance and then, more conveniently, on
\begin{equation}\label{convenient}\begin{picture}(112,8)(0,5)
\put(0,0){\line(0,1){20}}
\put(10,0){\line(0,1){20}}
\put(20,0){\line(0,1){20}}
\put(50,0){\line(0,1){20}}
\put(60,0){\line(0,1){20}}
\put(0,0){\line(1,0){60}}
\put(0,10){\line(1,0){60}}
\put(0,20){\line(1,0){60}}
\put(36,4.5){\makebox(0,0){$\cdots$}}
\put(36,14.5){\makebox(0,0){$\cdots$}}
\put(86,10){\makebox(0,0){\large$({\mathbb{C}}\otimes{\mathbb{T}})$}}
\put(30,-9.5){\makebox(0,0){$\underbrace{\hspace{58pt}}_k$}}
\end{picture}=\underbrace{\begin{picture}(170,14)
\put(5,2){\makebox(0,0){$\bullet$}}
\put(25,2){\makebox(0,0){$\bullet$}}
\put(45,2){\makebox(0,0){$\bullet$}}
\put(65,2){\makebox(0,0){$\bullet$}}
\put(85,2){\makebox(0,0){$\bullet$}}
\put(105,2){\makebox(0,0){$\bullet$}}
\put(125,2){\makebox(0,0){$\bullet$}}
\put(145,2){\makebox(0,0){$\bullet$}}
\put(165,2){\makebox(0,0){$\bullet$}}
\put(5,2){\line(1,0){160}}
\put(5,9){\makebox(0,0){$\scriptstyle 0$}}
\put(25,9){\makebox(0,0){$\scriptstyle k$}}
\put(45,9){\makebox(0,0){$\scriptstyle 0$}}
\put(65,9){\makebox(0,0){$\scriptstyle 0$}}
\put(85,9){\makebox(0,0){$\scriptstyle 0$}}
\put(105,9){\makebox(0,0){$\scriptstyle 0$}}
\put(125,9){\makebox(0,0){$\scriptstyle 0$}}
\put(145,9){\makebox(0,0){$\scriptstyle 0$}}
\put(165,9){\makebox(0,0){$\scriptstyle 0$}}
\end{picture}}_{2n+1\mbox{ \scriptsize nodes}}\end{equation}

\smallskip\noindent without changing anything provided that we now compute
complex dimensions instead of real.  The advantage of complexifying a vector
space that is already complex, in this case ${\mathbb{T}}$ equipped with the
action of $J_\alpha{}^\beta$, is well-known, namely that tensors now decompose
into various `types' (see, e.g.~\cite[\S1.1]{CEMN}).  Moreover, with the
derivation action~(\ref{derivation_action}), the constraint
$(J\Sigma)_{\alpha\beta\cdots\eta}=0$ on the $2k$-tensor
$\Sigma_{\alpha\beta\cdots\eta}$ is simply that it be of type $(k,k)$.  For
convenience, let us now write Dynkin diagrams with the appropriate number of
nodes for~$n=4$ (as we already did with~(\ref{convenient})).  With these
preliminaries in mind, for the real dimension of~(\ref{key_vector_space}), we
are, equivalently, asking for
$$\dim_{\mathbb{C}}(\underbrace{\begin{picture}(90,14)
\put(5,2){\makebox(0,0){$\bullet$}}
\put(25,2){\makebox(0,0){$\bullet$}}
\put(45,2){\makebox(0,0){$\bullet$}}
\put(65,2){\makebox(0,0){$\bullet$}}
\put(85,2){\makebox(0,0){$\bullet$}}
\put(5,2){\line(1,0){60}}
\put(65,1){\line(1,0){20}}
\put(65,3){\line(1,0){20}}
\put(75,2){\makebox(0,0){$\langle$}}
\put(5,10){\makebox(0,0){$\scriptstyle 0$}}
\put(25,10){\makebox(0,0){$\scriptstyle k$}}
\put(45,10){\makebox(0,0){$\scriptstyle 0$}}
\put(65,10){\makebox(0,0){$\scriptstyle 0$}}
\put(85,10){\makebox(0,0){$\scriptstyle 0$}}
\end{picture}}_{n+1\mbox{ \scriptsize nodes}})^{k,k}.$$
To compute this, let us keep the `trace-free with respect to 
$J^{\alpha\beta}$' requirement on hold for the moment and start by branching 
from ${\mathrm{SL}}(2n+2,{\mathbb{R}})$ to ${\mathrm{SL}}(n+1,{\mathbb{C}})$, 
to obtain
$$(\begin{picture}(170,14)
\put(5,2){\makebox(0,0){$\bullet$}}
\put(25,2){\makebox(0,0){$\bullet$}}
\put(45,2){\makebox(0,0){$\bullet$}}
\put(65,2){\makebox(0,0){$\bullet$}}
\put(85,2){\makebox(0,0){$\bullet$}}
\put(105,2){\makebox(0,0){$\bullet$}}
\put(125,2){\makebox(0,0){$\bullet$}}
\put(145,2){\makebox(0,0){$\bullet$}}
\put(165,2){\makebox(0,0){$\bullet$}}
\put(5,2){\line(1,0){160}}
\put(5,9){\makebox(0,0){$\scriptstyle 0$}}
\put(25,9){\makebox(0,0){$\scriptstyle k$}}
\put(45,9){\makebox(0,0){$\scriptstyle 0$}}
\put(65,9){\makebox(0,0){$\scriptstyle 0$}}
\put(85,9){\makebox(0,0){$\scriptstyle 0$}}
\put(105,9){\makebox(0,0){$\scriptstyle 0$}}
\put(125,9){\makebox(0,0){$\scriptstyle 0$}}
\put(145,9){\makebox(0,0){$\scriptstyle 0$}}
\put(165,9){\makebox(0,0){$\scriptstyle 0$}}
\end{picture})^{k,k}
=\!\!\bigoplus_{p+2q=k}
\!\!\begin{picture}(70,14)
\put(5,2){\makebox(0,0){$\bullet$}}
\put(25,2){\makebox(0,0){$\bullet$}}
\put(45,2){\makebox(0,0){$\bullet$}}
\put(65,2){\makebox(0,0){$\bullet$}}
\put(5,2){\line(1,0){60}}
\put(5,9){\makebox(0,0){$\scriptstyle p$}}
\put(25,9){\makebox(0,0){$\scriptstyle q$}}
\put(45,9){\makebox(0,0){$\scriptstyle 0$}}
\put(65,9){\makebox(0,0){$\scriptstyle 0$}}
\end{picture}(\Wedge^{1,0})
\otimes\begin{picture}(70,14)
\put(5,2){\makebox(0,0){$\bullet$}}
\put(25,2){\makebox(0,0){$\bullet$}}
\put(45,2){\makebox(0,0){$\bullet$}}
\put(65,2){\makebox(0,0){$\bullet$}}
\put(5,2){\line(1,0){60}}
\put(5,9){\makebox(0,0){$\scriptstyle p$}}
\put(25,9){\makebox(0,0){$\scriptstyle q$}}
\put(45,9){\makebox(0,0){$\scriptstyle 0$}}
\put(65,9){\makebox(0,0){$\scriptstyle 0$}}
\end{picture}(\Wedge^{0,1})$$
and it follows that
$$\dim_{\mathbb{C}}(\begin{picture}(170,14)
\put(5,2){\makebox(0,0){$\bullet$}}
\put(25,2){\makebox(0,0){$\bullet$}}
\put(45,2){\makebox(0,0){$\bullet$}}
\put(65,2){\makebox(0,0){$\bullet$}}
\put(85,2){\makebox(0,0){$\bullet$}}
\put(105,2){\makebox(0,0){$\bullet$}}
\put(125,2){\makebox(0,0){$\bullet$}}
\put(145,2){\makebox(0,0){$\bullet$}}
\put(165,2){\makebox(0,0){$\bullet$}}
\put(5,2){\line(1,0){160}}
\put(5,9){\makebox(0,0){$\scriptstyle 0$}}
\put(25,9){\makebox(0,0){$\scriptstyle k$}}
\put(45,9){\makebox(0,0){$\scriptstyle 0$}}
\put(65,9){\makebox(0,0){$\scriptstyle 0$}}
\put(85,9){\makebox(0,0){$\scriptstyle 0$}}
\put(105,9){\makebox(0,0){$\scriptstyle 0$}}
\put(125,9){\makebox(0,0){$\scriptstyle 0$}}
\put(145,9){\makebox(0,0){$\scriptstyle 0$}}
\put(165,9){\makebox(0,0){$\scriptstyle 0$}}
\end{picture})^{k,k}=\!\!
\sum_{p+2q=k}\left(\frac{(n+q-1)!(n+p+q)!(p+1)}{q!(p+q+1)!(n-1)!n!}\right)^2.$$

We may impose the `trace-free with respect to $J^{\alpha\beta}$' constraint
separately.  Specifically, there are evident short exact sequences
$$\begin{array}{ccccccc}0&\to&\begin{picture}(90,14)
\put(5,2){\makebox(0,0){$\bullet$}}
\put(25,2){\makebox(0,0){$\bullet$}}
\put(45,2){\makebox(0,0){$\bullet$}}
\put(65,2){\makebox(0,0){$\bullet$}}
\put(85,2){\makebox(0,0){$\bullet$}}
\put(5,2){\line(1,0){60}}
\put(65,1){\line(1,0){20}}
\put(65,3){\line(1,0){20}}
\put(75,2){\makebox(0,0){$\langle$}}
\put(5,10){\makebox(0,0){$\scriptstyle 0$}}
\put(25,10){\makebox(0,0){$\scriptstyle k$}}
\put(45,10){\makebox(0,0){$\scriptstyle 0$}}
\put(65,10){\makebox(0,0){$\scriptstyle 0$}}
\put(85,10){\makebox(0,0){$\scriptstyle 0$}}
\end{picture}
&\to&\begin{picture}(170,14)
\put(5,2){\makebox(0,0){$\bullet$}}
\put(25,2){\makebox(0,0){$\bullet$}}
\put(45,2){\makebox(0,0){$\bullet$}}
\put(65,2){\makebox(0,0){$\bullet$}}
\put(85,2){\makebox(0,0){$\bullet$}}
\put(105,2){\makebox(0,0){$\bullet$}}
\put(125,2){\makebox(0,0){$\bullet$}}
\put(145,2){\makebox(0,0){$\bullet$}}
\put(165,2){\makebox(0,0){$\bullet$}}
\put(5,2){\line(1,0){160}}
\put(5,9){\makebox(0,0){$\scriptstyle 0$}}
\put(25,9){\makebox(0,0){$\scriptstyle k$}}
\put(45,9){\makebox(0,0){$\scriptstyle 0$}}
\put(65,9){\makebox(0,0){$\scriptstyle 0$}}
\put(85,9){\makebox(0,0){$\scriptstyle 0$}}
\put(105,9){\makebox(0,0){$\scriptstyle 0$}}
\put(125,9){\makebox(0,0){$\scriptstyle 0$}}
\put(145,9){\makebox(0,0){$\scriptstyle 0$}}
\put(165,9){\makebox(0,0){$\scriptstyle 0$}}
\end{picture}\\
&&&&J\downarrow\phantom{J}\\
&&&&\begin{picture}(170,14)
\put(5,2){\makebox(0,0){$\bullet$}}
\put(25,2){\makebox(0,0){$\bullet$}}
\put(45,2){\makebox(0,0){$\bullet$}}
\put(65,2){\makebox(0,0){$\bullet$}}
\put(85,2){\makebox(0,0){$\bullet$}}
\put(105,2){\makebox(0,0){$\bullet$}}
\put(125,2){\makebox(0,0){$\bullet$}}
\put(145,2){\makebox(0,0){$\bullet$}}
\put(165,2){\makebox(0,0){$\bullet$}}
\put(5,2){\line(1,0){160}}
\put(5,9){\makebox(0,0){$\scriptstyle 0$}}
\put(25,9){\makebox(0,0){$\scriptstyle k-1$}}
\put(45,9){\makebox(0,0){$\scriptstyle 0$}}
\put(65,9){\makebox(0,0){$\scriptstyle 0$}}
\put(85,9){\makebox(0,0){$\scriptstyle 0$}}
\put(105,9){\makebox(0,0){$\scriptstyle 0$}}
\put(125,9){\makebox(0,0){$\scriptstyle 0$}}
\put(145,9){\makebox(0,0){$\scriptstyle 0$}}
\put(165,9){\makebox(0,0){$\scriptstyle 0$}}
\end{picture}&\to&0\end{array}$$
and, consequently, the short exact sequences
$$\begin{array}{ccccccc}0&\to&(\begin{picture}(90,14)
\put(5,2){\makebox(0,0){$\bullet$}}
\put(25,2){\makebox(0,0){$\bullet$}}
\put(45,2){\makebox(0,0){$\bullet$}}
\put(65,2){\makebox(0,0){$\bullet$}}
\put(85,2){\makebox(0,0){$\bullet$}}
\put(5,2){\line(1,0){60}}
\put(65,1){\line(1,0){20}}
\put(65,3){\line(1,0){20}}
\put(75,2){\makebox(0,0){$\langle$}}
\put(5,10){\makebox(0,0){$\scriptstyle 0$}}
\put(25,10){\makebox(0,0){$\scriptstyle k$}}
\put(45,10){\makebox(0,0){$\scriptstyle 0$}}
\put(65,10){\makebox(0,0){$\scriptstyle 0$}}
\put(85,10){\makebox(0,0){$\scriptstyle 0$}}
\end{picture})^{k,k}
&\!\to\!&(\begin{picture}(170,14)
\put(5,2){\makebox(0,0){$\bullet$}}
\put(25,2){\makebox(0,0){$\bullet$}}
\put(45,2){\makebox(0,0){$\bullet$}}
\put(65,2){\makebox(0,0){$\bullet$}}
\put(85,2){\makebox(0,0){$\bullet$}}
\put(105,2){\makebox(0,0){$\bullet$}}
\put(125,2){\makebox(0,0){$\bullet$}}
\put(145,2){\makebox(0,0){$\bullet$}}
\put(165,2){\makebox(0,0){$\bullet$}}
\put(5,2){\line(1,0){160}}
\put(5,9){\makebox(0,0){$\scriptstyle 0$}}
\put(25,9){\makebox(0,0){$\scriptstyle k$}}
\put(45,9){\makebox(0,0){$\scriptstyle 0$}}
\put(65,9){\makebox(0,0){$\scriptstyle 0$}}
\put(85,9){\makebox(0,0){$\scriptstyle 0$}}
\put(105,9){\makebox(0,0){$\scriptstyle 0$}}
\put(125,9){\makebox(0,0){$\scriptstyle 0$}}
\put(145,9){\makebox(0,0){$\scriptstyle 0$}}
\put(165,9){\makebox(0,0){$\scriptstyle 0$}}
\end{picture})^{k,k}\\
&&&&J\downarrow\phantom{J}\\
&&&&(\begin{picture}(170,14)
\put(5,2){\makebox(0,0){$\bullet$}}
\put(25,2){\makebox(0,0){$\bullet$}}
\put(45,2){\makebox(0,0){$\bullet$}}
\put(65,2){\makebox(0,0){$\bullet$}}
\put(85,2){\makebox(0,0){$\bullet$}}
\put(105,2){\makebox(0,0){$\bullet$}}
\put(125,2){\makebox(0,0){$\bullet$}}
\put(145,2){\makebox(0,0){$\bullet$}}
\put(165,2){\makebox(0,0){$\bullet$}}
\put(5,2){\line(1,0){160}}
\put(5,9){\makebox(0,0){$\scriptstyle 0$}}
\put(25,9){\makebox(0,0){$\scriptstyle k-1$}}
\put(45,9){\makebox(0,0){$\scriptstyle 0$}}
\put(65,9){\makebox(0,0){$\scriptstyle 0$}}
\put(85,9){\makebox(0,0){$\scriptstyle 0$}}
\put(105,9){\makebox(0,0){$\scriptstyle 0$}}
\put(125,9){\makebox(0,0){$\scriptstyle 0$}}
\put(145,9){\makebox(0,0){$\scriptstyle 0$}}
\put(165,9){\makebox(0,0){$\scriptstyle 0$}}
\end{picture})^{k-1,k-1}&\to&0\end{array}$$
from which the final dimension formula can be read off, the result being as 
follows.
\begin{thm} The dimension of the vector space \eqref{key_vector_space} is
$$\sum_{p+2q=k}
\!\!\left(\frac{(n+q-1)!(n+p+q)!(p+1)}{q!(p+q+1)!(n-1)!n!}\right)^2
\enskip
-\!\!\sum_{p+2q=k-1}
\!\!\left(\frac{(n+q-1)!(n+p+q)!(p+1)}{q!(p+q+1)!(n-1)!n!}\right)^2.$$
\end{thm}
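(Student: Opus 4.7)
The plan is to compute the dimension representation-theoretically, by complexifying, branching from $\mathrm{SL}(2n+2,\mathbb{C})$ down to $\mathrm{SL}(n+1,\mathbb{C})$, and then accounting for the $J^{\alpha\beta}$-trace constraint via a short exact sequence. The overall structure mirrors the steps already assembled in the text preceding the theorem statement.

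First, I would complexify. Since $\mathbb{T}$ already carries the complex structure $J_\alpha{}^\beta$, the real dimension of~(\ref{key_vector_space}) equals its complex dimension after tensoring with~$\mathbb{C}$. Complexification splits $\mathbb{T}_\mathbb{C}=T^{1,0}\oplus T^{0,1}$ as the $\pm i$-eigenspaces of~$J$, and $\mathrm{SL}(n+1,\mathbb{C})\subset\mathrm{SL}(2n+2,\mathbb{C})$ acts standardly on one summand and dually on the other. The derivation constraint $(J\Sigma)=0$ then translates precisely to the requirement of type $(k,k)$: exactly $k$ holomorphic and $k$ antiholomorphic index slots.

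Next, I would branch the two-row Young functor of shape $(k,k)$ on $\mathbb{T}_\mathbb{C}=V\oplus W$ to its $(k,k)$-bidegree piece, obtaining (by Littlewood--Richardson for two-row shapes)
\[
\mathbb{S}_{(k,k)}(V\oplus W)^{(k,k)}
=\bigoplus_{p+2q=k}\mathbb{S}_{(p+q,q)}(V)\otimes\mathbb{S}_{(p+q,q)}(W),
\]
which corresponds to the decomposition of $(p,q,0,\ldots,0)$-weights displayed in the preceding text. The hook-content formula then gives
\[
\dim_\mathbb{C}\mathbb{S}_{(p+q,q)}(\mathbb{C}^{n+1})
=\frac{(n+q-1)!\,(n+p+q)!\,(p+1)}{q!\,(p+q+1)!\,(n-1)!\,n!},
\]
so squaring and summing over $p+2q=k$ yields the first sum in the target formula --- this is the dimension of the $(k,k)$-type part before imposing trace-freeness. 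To impose the $J^{\alpha\beta}$-trace constraint I would use the surjection obtained by contracting one index from each of the two rows of the Young diagram with $J^{\alpha\beta}$; nondegeneracy of $J^{\alpha\beta}$ delivers surjectivity, the image lies in the shape with $k-1$ columns (since the Young symmetrizations are compatible with this contraction), and the kernel is exactly the trace-free space of~(\ref{key_vector_space}). The resulting short exact sequence, intersected with the $(k,k)$- and $(k-1,k-1)$-isotypical pieces, delivers the stated formula as a difference of sums.

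The main obstacle is justifying the Littlewood--Richardson step: verifying that the coefficient $c^{(k,k)}_{\mu\nu}$ is $1$ precisely when $\mu=\nu=(p+q,q)$ with $p+2q=k$, and zero otherwise. For a two-row target shape this can be done directly by tableau inspection, since strict column-increase combined with the lattice-word condition forces the filling of $(k,k)/\mu$ by content $\nu$ to be unique and to require $\mu=\nu$; alternatively, this matches the general $V\oplus W$-branching for Schur functors labelled by rectangular and near-rectangular shapes, and can be read off from the character-theoretic expansion of the $(k,k)$-type piece in terms of products of Schur polynomials in two sets of variables.
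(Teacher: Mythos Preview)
Your proposal is correct and follows essentially the same route as the paper: complexify so that $(J\Sigma)=0$ becomes the type-$(k,k)$ condition, branch the two-row Schur functor on $V\oplus W$ to obtain $\bigoplus_{p+2q=k}\mathbb{S}_{(p+q,q)}(V)\otimes\mathbb{S}_{(p+q,q)}(W)$, square the hook-content dimensions, and then subtract via the short exact sequence implementing the $J^{\alpha\beta}$-trace. The only difference is emphasis: the paper simply asserts the branching decomposition and the exactness, whereas you supply the Littlewood--Richardson and surjectivity justifications explicitly.
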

\begin{cor} The space of rank $k$ Killing tensors on ${\mathbb{CP}}_n$ is 
finite-dimensional and
\begin{center}\begin{tabular}{rcl}
$k=1$&$\Rightarrow$& dimension is $n(n+2)$\\
$k=2$&$\Rightarrow$& dimension is $n(n+1)^2(n+2)/2$\\
$k=3$&$\Rightarrow$& dimension is
$n(n+1)^2(5n^3+25n^2+35n+24)/36$\\
$k=4$&$\Rightarrow$& dimension is
$n(n+1)^2(n+2)^2(7n^3+38n^2+39n+36)/288$
\end{tabular}\end{center}
\end{cor}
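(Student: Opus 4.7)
The plan is to implement the programme already sketched in the paragraphs leading up to the statement: pass to complex coefficients, branch from $\mathrm{SL}(2n+2,\mathbb{R})$ down to $\mathrm{SL}(n+1,\mathbb{C})$, pick out the type-$(k,k)$ component, and finally impose the $J^{\alpha\beta}$-trace-free constraint via a short exact sequence.

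First I would observe that because $\mathbb{T}$ already carries the complex structure $J_\alpha{}^\beta$, the complexification $\mathbb{C}\otimes\mathbb{T}$ splits as $\Wedge^{1,0}\oplus\Wedge^{0,1}$, and the real dimension of the space \eqref{key_vector_space} equals the complex dimension of its complexification. Under this splitting, the derivation action \eqref{derivation_action} is diagonalised: the condition $J\Sigma=0$ on a $2k$-tensor is exactly the condition that $\Sigma$ be of type $(k,k)$, i.e.~it lies in the summand with $k$ holomorphic and $k$ antiholomorphic indices. Thus, ignoring the $J^{\alpha\beta}$-trace-free requirement for a moment, I need the complex dimension of the $(k,k)$-component of the two-row Young tableau representation displayed in the statement.

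Next I would compute this $(k,k)$-component as an $\mathrm{SL}(n+1,\mathbb{C})$-representation. The two-row tableau of length $k$ on $\mathbb{T}$ is a subspace of $\bigodot^k\!\Wedge^1\otimes\bigodot^k\!\Wedge^1$ cut out by one antisymmetrisation, and its $(k,k)$-piece consists of tensors with $k$ holomorphic and $k$ antiholomorphic indices. Decomposing $\bigodot^k(\Wedge^{1,0}\oplus\Wedge^{0,1})$ and then imposing the Young symmetry via Littlewood--Richardson (equivalently, repeated Pieri), the holomorphic factor decomposes into $\mathrm{SL}(n+1,\mathbb{C})$-irreducibles indexed by Young diagrams of two rows of lengths $p+q\geq q$ with $p+2q=k$, each appearing once; the antiholomorphic factor decomposes symmetrically, and the two factors pair up diagonally. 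This yields the claimed decomposition
$$\bigoplus_{p+2q=k}\;V_{p,q}\otimes\overline{V_{p,q}},$$
where $V_{p,q}$ is the $\mathrm{SL}(n+1,\mathbb{C})$-irreducible with Dynkin labels $(p,q,0,\ldots,0)$. Applying the Weyl dimension formula (or the hook content formula for the corresponding $\mathrm{GL}$-tableau) yields $\dim V_{p,q}=(n+q-1)!(n+p+q)!(p+1)/(q!(p+q+1)!(n-1)!n!)$, hence the sum of squares stated for the full (not yet $J$-trace-free) dimension.

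Finally I would impose the trace-free condition by showing that the $J^{\alpha\beta}$-trace is a surjective $\mathrm{SL}(n+1,\mathbb{C})$-equivariant map from the $(k,k)$-piece of the length-$k$ two-row tableau onto the $(k-1,k-1)$-piece of the length-$(k-1)$ two-row tableau, so that the kernel has dimension equal to the difference of the two sums. Equivariance is automatic since $J^{\alpha\beta}$ is $\mathrm{SU}(n+1)$-invariant. Surjectivity can be verified by constructing, in each summand $V_{p,q}\otimes\overline{V_{p,q}}$ of the target, an explicit preimage: concretely, contracting with $J_{\alpha\beta}$ (the inverse operation in the $(1,1)$-direction of the Young tableau) lifts a trace back, and the composition of lift and trace is a nonzero scalar on each irreducible summand by Schur's lemma. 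Subtracting the dimensions then gives the stated formula. The main obstacle is the bookkeeping in the decomposition step: verifying both that only the pairs with $p+2q=k$ appear and that the multiplicities are exactly one, which is where the Pieri rule must be applied with some care to the conjugate factors.
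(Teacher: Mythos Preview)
Your proposal correctly re-derives the general dimension formula---the content of the Theorem immediately preceding this Corollary---by exactly the route the paper takes: complexify, identify $J\Sigma=0$ with the type-$(k,k)$ condition, branch to $\mathrm{SL}(n+1,\mathbb{C})$ and decompose as $\bigoplus_{p+2q=k}V_{p,q}\otimes\overline{V_{p,q}}$, apply the Weyl dimension formula, and then impose the $J^{\alpha\beta}$-trace-free condition via the short exact sequence (your Schur-lemma argument for surjectivity of the trace is a little more explicit than the paper, which simply calls the sequences ``evident''). So as a derivation of the general formula your approach matches the paper.

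However, the Corollary itself is not this derivation: the general formula is already established in the Theorem, and the paper's proof of the Corollary is literally to substitute $k=1,2,3,4$ into that formula and simplify the resulting sums of squares into the stated closed-form polynomials in~$n$ (the paper says ``with aid from a computer''). You have not carried out that final substitution-and-simplification step, which is the entire content of this Corollary. It is mechanical, but it is the step you were asked for.
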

\begin{proof}
These are obtained by substituting these particular values of $k$ into the 
general formula (with aid from a computer). For higher $k$, one runs into 
serious expressions. 
\end{proof}	
\noindent Here are some numerical values for the dimension 
of~(\ref{key_vector_space}) for small $k$ and $n$.
$$\begin{array}{cccccccc}&{\mathbb{CP}}_1&{\mathbb{CP}}_2&{\mathbb{CP}}_3
&{\mathbb{CP}}_4&{\mathbb{CP}}_5&{\mathbb{CP}}_6&{\mathbb{CP}}_7\\
\mbox{ rank $1$}&3&8&15&24&35&48&63\\
\mbox{ rank $2$}&6&36&120&300&630&1176&2016\\
\mbox{ rank $3$}&10&119&664&2500&7370&18375&40544\\
\mbox{ rank $4$}&15&322&2850&15600&62965&205800&576072\\
\mbox{ rank $5$}&21&756&10142&78252&422919&1782032&6246072\\
\end{array}$$
Another way to find these numbers is to branch the irreducible representation
$$\underbrace{\begin{picture}(130,14)
\put(5,2){\makebox(0,0){$\bullet$}}
\put(25,2){\makebox(0,0){$\bullet$}}
\put(45,2){\makebox(0,0){$\bullet$}}
\put(65,2){\makebox(0,0){$\bullet$}}
\put(85,2){\makebox(0,0){$\bullet$}}
\put(105,2){\makebox(0,0){$\bullet$}}
\put(125,2){\makebox(0,0){$\bullet$}}
\put(5,2){\line(1,0){100}}
\put(105,1){\line(1,0){20}}
\put(105,3){\line(1,0){20}}
\put(115,2){\makebox(0,0){$\langle$}}
\put(5,10){\makebox(0,0){$\scriptstyle 0$}}
\put(25,10){\makebox(0,0){$\scriptstyle k$}}
\put(45,10){\makebox(0,0){$\scriptstyle 0$}}
\put(65,10){\makebox(0,0){$\scriptstyle 0$}}
\put(85,10){\makebox(0,0){$\scriptstyle 0$}}
\put(105,10){\makebox(0,0){$\scriptstyle 0$}}
\put(125,10){\makebox(0,0){$\scriptstyle 0$}}
\end{picture}}_{n+1\mbox{ \scriptsize nodes}}$$
from ${\mathrm{Sp}}(2n+2,{\mathbb{R}})$ to ${\mathrm{SU}}(n+1)$, pick out those 
irreducibles on which $J_\alpha{}^\beta$ acts trivially, and add the 
dimensions of these parts.  On the level of Lie algebras, one is branching for 
the Levi subalgebra
$$A_n\times A_1\hookrightarrow C_{n+1}\quad\mbox{corresponding to}\quad
\begin{picture}(130,14)
\put(5,2){\makebox(0,0){$\bullet$}}
\put(25,2){\makebox(0,0){$\bullet$}}
\put(45,2){\makebox(0,0){$\bullet$}}
\put(65,2){\makebox(0,0){$\bullet$}}
\put(85,2){\makebox(0,0){$\bullet$}}
\put(105,2){\makebox(0,0){$\bullet$}}
\put(125,2){\makebox(0,0){$\times$}}
\put(5,2){\line(1,0){100}}
\put(105,1){\line(1,0){19}}
\put(105,3){\line(1,0){19}}
\put(115,2){\makebox(0,0){$\langle$}}\end{picture}$$
and extracting those irreducibles on which $A_1$ acts trivially. A suitable 
computer program for doing this can be found here~\cite[Display~(2.6)]{EW}.

For rank $2$ Killing fields on ${\mathbb{CP}}_n$, we find that
$$\begin{array}{rcl}{\mathrm{Sp}}(2n+2,{\mathbb{R}})
&\supset&{\mathrm{SU}}(n+1)\hspace{68pt}\mbox{Dimensions}\\
(\underbrace{\begin{picture}(130,14)
\put(5,2){\makebox(0,0){$\bullet$}}
\put(25,2){\makebox(0,0){$\bullet$}}
\put(45,2){\makebox(0,0){$\bullet$}}
\put(65,2){\makebox(0,0){$\bullet$}}
\put(85,2){\makebox(0,0){$\bullet$}}
\put(105,2){\makebox(0,0){$\bullet$}}
\put(125,2){\makebox(0,0){$\bullet$}}
\put(5,2){\line(1,0){100}}
\put(105,1){\line(1,0){20}}
\put(105,3){\line(1,0){20}}
\put(115,2){\makebox(0,0){$\langle$}}
\put(5,10){\makebox(0,0){$\scriptstyle 0$}}
\put(25,10){\makebox(0,0){$\scriptstyle 2$}}
\put(45,10){\makebox(0,0){$\scriptstyle 0$}}
\put(65,10){\makebox(0,0){$\scriptstyle 0$}}
\put(85,10){\makebox(0,0){$\scriptstyle 0$}}
\put(105,10){\makebox(0,0){$\scriptstyle 0$}}
\put(125,10){\makebox(0,0){$\scriptstyle 0$}}
\end{picture}}_{n+1\mbox{ \scriptsize nodes}})^{2,2}
&=&\!\!\!\begin{array}[t]{cl}\begin{picture}(110,14)
\put(5,2){\makebox(0,0){$\bullet$}}
\put(25,2){\makebox(0,0){$\bullet$}}
\put(45,2){\makebox(0,0){$\bullet$}}
\put(65,2){\makebox(0,0){$\bullet$}}
\put(85,2){\makebox(0,0){$\bullet$}}
\put(105,2){\makebox(0,0){$\bullet$}}
\put(5,2){\line(1,0){100}}
\put(5,10){\makebox(0,0){$\scriptstyle 2$}}
\put(25,10){\makebox(0,0){$\scriptstyle 0$}}
\put(45,10){\makebox(0,0){$\scriptstyle 0$}}
\put(65,10){\makebox(0,0){$\scriptstyle 0$}}
\put(85,10){\makebox(0,0){$\scriptstyle 0$}}
\put(105,10){\makebox(0,0){$\scriptstyle 2$}}
\end{picture}&n(n+1)^2(n+4)/4\\[-5pt]
\oplus\\[-6pt]
\begin{picture}(110,14)
\put(5,2){\makebox(0,0){$\bullet$}}
\put(25,2){\makebox(0,0){$\bullet$}}
\put(45,2){\makebox(0,0){$\bullet$}}
\put(65,2){\makebox(0,0){$\bullet$}}
\put(85,2){\makebox(0,0){$\bullet$}}
\put(105,2){\makebox(0,0){$\bullet$}}
\put(5,2){\line(1,0){100}}
\put(5,10){\makebox(0,0){$\scriptstyle 0$}}
\put(25,10){\makebox(0,0){$\scriptstyle 1$}}
\put(45,10){\makebox(0,0){$\scriptstyle 0$}}
\put(65,10){\makebox(0,0){$\scriptstyle 0$}}
\put(85,10){\makebox(0,0){$\scriptstyle 1$}}
\put(105,10){\makebox(0,0){$\scriptstyle 0$}}
\end{picture}&(n-2)(n+1)^2(n+2)/4\\[-5pt]
\oplus\\[-6pt]
\begin{picture}(110,14)
\put(5,2){\makebox(0,0){$\bullet$}}
\put(25,2){\makebox(0,0){$\bullet$}}
\put(45,2){\makebox(0,0){$\bullet$}}
\put(65,2){\makebox(0,0){$\bullet$}}
\put(85,2){\makebox(0,0){$\bullet$}}
\put(105,2){\makebox(0,0){$\bullet$}}
\put(5,2){\line(1,0){100}}
\put(5,10){\makebox(0,0){$\scriptstyle 1$}}
\put(25,10){\makebox(0,0){$\scriptstyle 0$}}
\put(45,10){\makebox(0,0){$\scriptstyle 0$}}
\put(65,10){\makebox(0,0){$\scriptstyle 0$}}
\put(85,10){\makebox(0,0){$\scriptstyle 0$}}
\put(105,10){\makebox(0,0){$\scriptstyle 1$}}
\end{picture}&n(n+1)\\[-5pt]
\oplus\\[-6pt]
\begin{picture}(110,14)
\put(5,2){\makebox(0,0){$\bullet$}}
\put(25,2){\makebox(0,0){$\bullet$}}
\put(45,2){\makebox(0,0){$\bullet$}}
\put(65,2){\makebox(0,0){$\bullet$}}
\put(85,2){\makebox(0,0){$\bullet$}}
\put(105,2){\makebox(0,0){$\bullet$}}
\put(5,2){\line(1,0){100}}
\put(5,10){\makebox(0,0){$\scriptstyle 0$}}
\put(25,10){\makebox(0,0){$\scriptstyle 0$}}
\put(45,10){\makebox(0,0){$\scriptstyle 0$}}
\put(65,10){\makebox(0,0){$\scriptstyle 0$}}
\put(85,10){\makebox(0,0){$\scriptstyle 0$}}
\put(105,10){\makebox(0,0){$\scriptstyle 0$}}
\end{picture}&1\\ \hline\hline\\[-10pt]
\makebox[70pt][l]{Total dimension = $n(n+1)^2(n+2)/2$}
\end{array}\end{array}$$
This decomposition is given by Delong~\cite[\S(6.2)]{D} under the assumption
that these Killing tensors of rank $2$ are generated by the Killing fields (as
we shall see to be true in general for Killing tensors of arbitrary rank
in~\S\ref{generation}), and he observes that
$$\textstyle\bigodot^2{\mathfrak{su}}(n+1)\to
(\underbrace{\begin{picture}(90,14)
\put(5,2){\makebox(0,0){$\bullet$}}
\put(25,2){\makebox(0,0){$\bullet$}}
\put(45,2){\makebox(0,0){$\bullet$}}
\put(65,2){\makebox(0,0){$\bullet$}}
\put(85,2){\makebox(0,0){$\bullet$}}
\put(5,2){\line(1,0){60}}
\put(65,1){\line(1,0){20}}
\put(65,3){\line(1,0){20}}
\put(75,2){\makebox(0,0){$\langle$}}
\put(5,10){\makebox(0,0){$\scriptstyle 0$}}
\put(25,10){\makebox(0,0){$\scriptstyle 2$}}
\put(45,10){\makebox(0,0){$\scriptstyle 0$}}
\put(65,10){\makebox(0,0){$\scriptstyle 0$}}
\put(85,10){\makebox(0,0){$\scriptstyle 0$}}
\end{picture}}_{n+1\mbox{ \scriptsize nodes}})^{2,2}$$
is then, not only surjective, but actually an isomorphism.  Regarding rank
$k=3$ on ${\mathbb{CP}}_2$, he shows that
$$\textstyle\bigodot^2{\mathfrak{su}}(3)\to
(\begin{picture}(50,14)
\put(5,2){\makebox(0,0){$\bullet$}}
\put(25,2){\makebox(0,0){$\bullet$}}
\put(45,2){\makebox(0,0){$\bullet$}}
\put(5,2){\line(1,0){20}}
\put(25,1){\line(1,0){20}}
\put(25,3){\line(1,0){20}}
\put(35,2){\makebox(0,0){$\langle$}}
\put(5,10){\makebox(0,0){$\scriptstyle 0$}}
\put(25,10){\makebox(0,0){$\scriptstyle 3$}}
\put(45,10){\makebox(0,0){$\scriptstyle 0$}}
\end{picture})^{3,3}$$
has just a $1$-dimensional kernel so that 
$$\dim(\begin{picture}(50,14)
\put(5,2){\makebox(0,0){$\bullet$}}
\put(25,2){\makebox(0,0){$\bullet$}}
\put(45,2){\makebox(0,0){$\bullet$}}
\put(5,2){\line(1,0){20}}
\put(25,1){\line(1,0){20}}
\put(25,3){\line(1,0){20}}
\put(35,2){\makebox(0,0){$\langle$}}
\put(5,10){\makebox(0,0){$\scriptstyle 0$}}
\put(25,10){\makebox(0,0){$\scriptstyle 3$}}
\put(45,10){\makebox(0,0){$\scriptstyle 0$}}
\end{picture})^{3,3}=\frac{8\times 9\times 10}{6}-1=119$$
and finds, for Killing tensors of arbitrary rank on ${\mathbb{CP}}_2$, that
\begin{equation}\label{delong_poincare_series}
\sum_{k\geq 0}\dim(\begin{picture}(50,14)
\put(5,2){\makebox(0,0){$\bullet$}}
\put(25,2){\makebox(0,0){$\bullet$}}
\put(45,2){\makebox(0,0){$\bullet$}}
\put(5,2){\line(1,0){20}}
\put(25,1){\line(1,0){20}}
\put(25,3){\line(1,0){20}}
\put(35,2){\makebox(0,0){$\langle$}}
\put(5,10){\makebox(0,0){$\scriptstyle 0$}}
\put(25,10){\makebox(0,0){$\scriptstyle k$}}
\put(45,10){\makebox(0,0){$\scriptstyle 0$}}
\end{picture})^{k,k}\,t^k\;=\frac{1+t+t^2}{(1-t)^7},\end{equation}
in agreement with the table above.  Finally, in~\cite[\S(6.3)]{D}, Delong shows
that on~${\mathbb{CP}}_2$ the Killing fields do, indeed, generate the rank $2$
Killing tensors.

There is some general computational machinery~\cite{GZ} for dealing with
so-called `first BGG operators,' such as~(\ref{killing_tensor_operator}), on
homogeneous geometries, such as~${\mathbb{CP}}_n$.  It is not clear how far one
can operate this machinery in computing the dimensions above.  In the
conformal case~\cite{GZconf}, however, Lenka Zalabov\'a informs me that, for
small $n$ and~$k$, their machinery can compute the dimension of the spaces of
rank $k$ {\em \underbar{conformal} Killing tensors\/} on~${\mathbb{CP}}_n$. It 
turns out, for example, that the space of rank 3 conformal Killing tensors on 
${\mathbb{CP}}_2$ also has dimension 119 so all of these correspond to 
ordinary Killing tensors (unlike the round sphere where there are 
always more tensors in the conformal regime).


\section{Killing fields generate}\label{generation}
The aim of this section is to prove Corollary~\ref{killing_fields_generate}, as
a consequence of Theorem~\ref{killing_tensors_on_cpn}.  Thus, we are required
to show the following algebraic statement.
\begin{prop}
The vector space homomorphism 
$$\textstyle\bigodot^k\!\Wedge_\perp^{1,1}({\mathbb{T}})
\longrightarrow\left\{\Sigma\in\begin{picture}(85,14)(0,7)
\put(0,0){\line(0,1){20}}
\put(10,0){\line(0,1){20}}
\put(20,0){\line(0,1){20}}
\put(50,0){\line(0,1){20}}
\put(60,0){\line(0,1){20}}
\put(0,0){\line(1,0){60}}
\put(0,10){\line(1,0){60}}
\put(0,20){\line(1,0){60}}
\put(36,4.5){\makebox(0,0){$\cdots$}}
\put(36,14.5){\makebox(0,0){$\cdots$}}
\put(64,2){\makebox(0,0){$\perp$}}
\put(75,10){\makebox(0,0){$({\mathbb{T}})$}}
\end{picture}\mbox{\Large$\mid$}J\Sigma=0\right\}$$
is surjective, where the subscript $\perp$ means to restrict to the subspace
where all traces with respect to $J^{\alpha\beta}$ vanish.
\end{prop}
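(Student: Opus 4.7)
The plan is to complexify and work with $\mathrm{SL}(n+1,\mathbb{C})$-representations, decomposing both sides into isotypes and invoking Schur's lemma. Writing $\mathbb{T}\otimes_{\mathbb{R}}\mathbb{C}=\mathbb{T}^{1,0}\oplus\mathbb{T}^{0,1}$, the domain becomes $\bigodot^k\mathfrak{sl}(n+1,\mathbb{C})$, and by the branching calculation in Section~4, the complexified codomain is
\[
\textstyle\bigoplus_{p+2q=k} V_{(p+q,q)}\otimes V_{(p+q,q)}^*
\]
modulo the image of the $J$-trace map from the analogous space for $k-1$. Here $V_\mu$ denotes the irreducible $\mathrm{SL}(n+1,\mathbb{C})$-representation with highest weight given by the partition $\mu$.

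First I would apply Cauchy's formula to the domain:
\[
\textstyle\bigodot^k\bigl(\mathbb{T}^{1,0}\otimes\mathbb{T}^{0,1}\bigr) = \bigoplus_{\lambda\vdash k,\;\ell(\lambda)\le n+1} V_\lambda\otimes V_\lambda^*.
\]
The subspace $\bigodot^k\mathfrak{sl}(n+1,\mathbb{C})$ is obtained by modding out the $\mathfrak{gl}$-trace, but this does not remove any of the two-row summands $V_\mu\otimes V_\mu^*$ with $\mu=(p+q,q)$ and $p+2q=k$. Hence the domain contains, as $\mathrm{SL}(n+1,\mathbb{C})$-sub-representations, exactly the same list of two-row isotypes that appear in the codomain.

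By equivariance and Schur's lemma, surjectivity then reduces to showing that the map is nonzero on each isotypic component $V_\mu\otimes V_\mu^*$. The case $\mu=(k)$ is transparent: the $k$-th symmetric power $X^{\odot k}$ of a rank-one matrix $X=e_i\otimes f^j\in\mathfrak{sl}(n+1,\mathbb{C})$ Young-projects onto a nonzero element of $V_{(k)}\otimes V_{(k)}^*$ (it is essentially a product of Killing $1$-forms coming from a single Killing field). For general two-row $\mu=(p+q,q)$, I would construct an analogous highest-weight preimage as a symmetric product of $p+q$ appropriate elementary matrices, and verify by index-manipulation that the Young $(k,k)$-symmetrizer does not annihilate it.

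The main obstacle will be the trace-removal bookkeeping: ensuring that the $\mathfrak{gl}$-trace quotient on the domain side and the $J^{\alpha\beta}$-trace quotient on the codomain side cooperate so that no codomain isotype is lost to the quotient. Since both traces arise from contractions with $\mathrm{SU}(n+1)$-invariant tensors, this compatibility is formal, but checking it explicitly requires comparing the exact sequence of Section~4 with the corresponding trace exact sequence on the domain. Once done, surjectivity on every two-row isotype follows, giving the proposition; as a sanity check, the total dimension of the image agrees with the formula derived in Section~4.
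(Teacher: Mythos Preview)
Your route differs genuinely from the paper's, which does not decompose into isotypes at all: it dualises to an injectivity statement and proves that by elementary index manipulation. Using only the Bianchi-type relations built into the tableau symmetry, any Young-symmetric tensor of type $(k,k)$ is rewritten as a combination of tensors with the alternating index pattern $a\bar{b}c\bar{d}\cdots$, placing it manifestly in $\bigodot^k\Wedge^{1,1}$. The trace-free condition then comes for free, since these manipulations introduce no contraction.

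Your plan could perhaps be completed, but as written there are two genuine gaps. The appeal to Schur's lemma for the pieces $V_\mu\otimes V_\mu^*$ requires equivariance under $\mathrm{GL}(n+1)\times\mathrm{GL}(n+1)$ acting separately on $\mathbb{T}^{1,0}$ and~$\mathbb{T}^{0,1}$, since those tensor products are \emph{not} irreducible for the diagonal $\mathrm{GL}(n+1)$. The Young projection does respect this larger action, but the trace-free conditions on both sides do not: the $J$-trace contracts a $(1,0)$-index against a $(0,1)$-index and is only diagonally equivariant. So once you pass from $\bigodot^k\mathfrak{gl}$ to $\bigodot^k\mathfrak{sl}$ and from the full $(k,k)$-piece to its $\perp$-part, your isotype argument is no longer available; the ``formal'' compatibility of the two traces that you defer is exactly the missing content. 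Second, even before imposing traces you have only checked that the map is nonzero on the isotype $\mu=(k)$; for the remaining two-row $\mu=(p+q,q)$ with $q\geq 1$ you still owe an explicit highest-weight check, and that check is of roughly the same difficulty as the paper's direct index argument.
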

\begin{proof}
Both of these vector spaces have natural inner products so it is equivalent to
show the adjoint statement, namely that the dual vector space
homomorphism
$$\textstyle\bigodot^k\!\Wedge_\perp^{1,1}({\mathbb{T}})
\longleftarrow\left\{\Sigma\in\begin{picture}(85,14)(0,7)
\put(0,0){\line(0,1){20}}
\put(10,0){\line(0,1){20}}
\put(20,0){\line(0,1){20}}
\put(50,0){\line(0,1){20}}
\put(60,0){\line(0,1){20}}
\put(0,0){\line(1,0){60}}
\put(0,10){\line(1,0){60}}
\put(0,20){\line(1,0){60}}
\put(36,4.5){\makebox(0,0){$\cdots$}}
\put(36,14.5){\makebox(0,0){$\cdots$}}
\put(64,2){\makebox(0,0){$\perp$}}
\put(75,10){\makebox(0,0){$({\mathbb{T}})$}}
\end{picture}\mbox{\Large$\mid$}J\Sigma=0\right\}$$
is injective.  As already indicated with the notation~$\Wedge_\perp^{1,1}$, if
we complexify both sides then we may capture the statement $J\Sigma=0$ by
decomposing $\Sigma$ into type and insisting that it have the same number of
barred and unbarred indices, namely~$k$.

Firstly, let us ignore the $J^{\alpha\beta}$-trace-zero condition, and 
also the `type $(k,k)$ condition' and try to show that
$$\raisebox{3pt}{$\bigodot^k\!\Wedge^2({\mathbb{T}})\longleftarrow
\begin{picture}(85,14)(0,7)
\put(0,0){\line(0,1){20}}
\put(10,0){\line(0,1){20}}
\put(20,0){\line(0,1){20}}
\put(50,0){\line(0,1){20}}
\put(60,0){\line(0,1){20}}
\put(0,0){\line(1,0){60}}
\put(0,10){\line(1,0){60}}
\put(0,20){\line(1,0){60}}
\put(36,4.5){\makebox(0,0){$\cdots$}}
\put(36,14.5){\makebox(0,0){$\cdots$}}
\put(70,10){\makebox(0,0){$({\mathbb{T}})$}}
\end{picture}$}$$
is injective.  Of course, this is true because this homomorphism is non-zero 
and the right hand side is an irreducible representation of 
${\mathrm{GL}}(2n+2,{\mathbb{R}})$. However, we can also show this by index 
manipulations.  If $k=2$, for example, then on the right hand side we have a 
tensor satisfying 
$$R_{\alpha\beta\gamma\delta}=R_{[\alpha\beta][\gamma\delta]}\enskip
\mbox{and}\enskip R_{[\alpha\beta\gamma]\delta}=0$$
and we are trying to show that it is already in the left hand side, namely that
$$R_{\alpha\beta\gamma\delta}=R_{\gamma\delta\alpha\beta}.$$
This is the standard observation that a tensor satisfying Riemann tensor
symmetries automatically satisfies the interchange symmetry (and this is
usually proved by index manipulations). For $k=3$, we may write 
$$\begin{picture}(30,8)(0,7)
\put(0,0){\line(0,1){20}}
\put(10,0){\line(0,1){20}}
\put(20,0){\line(0,1){20}}
\put(30,0){\line(0,1){20}}
\put(0,0){\line(1,0){30}}
\put(0,10){\line(1,0){30}}
\put(0,20){\line(1,0){30}}
\end{picture}
=\begin{picture}(20,8)(0,7)
\put(0,0){\line(0,1){20}}
\put(10,0){\line(0,1){20}}
\put(20,0){\line(0,1){20}}
\put(0,0){\line(1,0){20}}
\put(0,10){\line(1,0){20}}
\put(0,20){\line(1,0){20}}
\end{picture}\otimes
\begin{picture}(10,8)(0,7)
\put(0,0){\line(0,1){20}}
\put(10,0){\line(0,1){20}}
\put(0,0){\line(1,0){10}}
\put(0,10){\line(1,0){10}}
\put(0,20){\line(1,0){10}}
\end{picture}\;\cap \;
\begin{picture}(10,8)(0,7)
\put(0,0){\line(0,1){20}}
\put(10,0){\line(0,1){20}}
\put(0,0){\line(1,0){10}}
\put(0,10){\line(1,0){10}}
\put(0,20){\line(1,0){10}}
\end{picture}\otimes
\begin{picture}(20,8)(0,7)
\put(0,0){\line(0,1){20}}
\put(10,0){\line(0,1){20}}
\put(20,0){\line(0,1){20}}
\put(0,0){\line(1,0){20}}
\put(0,10){\line(1,0){20}}
\put(0,20){\line(1,0){20}}
\end{picture}$$
and apply this reasoning in the first four and last four indices separately to
conclude that a tensor in this space automatically lies in
$$\textstyle\begin{picture}(10,8)(0,7)
\put(0,0){\line(0,1){20}}
\put(10,0){\line(0,1){20}}
\put(0,0){\line(1,0){10}}
\put(0,10){\line(1,0){10}}
\put(0,20){\line(1,0){10}}
\end{picture}\odot
\begin{picture}(10,8)(0,7)
\put(0,0){\line(0,1){20}}
\put(10,0){\line(0,1){20}}
\put(0,0){\line(1,0){10}}
\put(0,10){\line(1,0){10}}
\put(0,20){\line(1,0){10}}
\end{picture}\otimes
\begin{picture}(10,8)(0,7)
\put(0,0){\line(0,1){20}}
\put(10,0){\line(0,1){20}}
\put(0,0){\line(1,0){10}}
\put(0,10){\line(1,0){10}}
\put(0,20){\line(1,0){10}}
\end{picture}\;\cap\;
\begin{picture}(10,8)(0,7)
\put(0,0){\line(0,1){20}}
\put(10,0){\line(0,1){20}}
\put(0,0){\line(1,0){10}}
\put(0,10){\line(1,0){10}}
\put(0,20){\line(1,0){10}}
\end{picture}\otimes
\begin{picture}(10,8)(0,7)
\put(0,0){\line(0,1){20}}
\put(10,0){\line(0,1){20}}
\put(0,0){\line(1,0){10}}
\put(0,10){\line(1,0){10}}
\put(0,20){\line(1,0){10}}
\end{picture}\odot
\begin{picture}(10,8)(0,7)
\put(0,0){\line(0,1){20}}
\put(10,0){\line(0,1){20}}
\put(0,0){\line(1,0){10}}
\put(0,10){\line(1,0){10}}
\put(0,20){\line(1,0){10}}
\end{picture}
=\bigodot^3\!\Wedge^2,$$
as required.

Now, let us reduce to ${\mathrm{SL}}(n+1,{\mathbb{C}})$ and bring in the `type
$(k,k)$ condition' for the complexified representations.  We aim to show that, 
for example, 
\begin{equation}\label{does_it_inject}\textstyle\begin{picture}(30,8)(0,7)
\put(0,0){\line(0,1){20}}
\put(10,0){\line(0,1){20}}
\put(20,0){\line(0,1){20}}
\put(30,0){\line(0,1){20}}
\put(0,0){\line(1,0){30}}
\put(0,10){\line(1,0){30}}
\put(0,20){\line(1,0){30}}
\end{picture}\,\rule{0pt}{13pt}^{3,3}\longrightarrow
\bigodot^3\Wedge^{1,1}\end{equation}
is injective.  Although the left hand side is no longer irreducible, we may
show this by direct index manipulations as follows (see, e.g.~\cite{CEMN}, for
a formal discussion of barred and unbarred indices). Typically, we find
\begin{equation}\label{index_manipulation}
R_{abc\bar{d}\bar{e}\bar{f}}=-R_{ab\bar{d}c\bar{e}\bar{f}}
=R_{b\bar{d}ac\bar{e}\bar{f}}-R_{a\bar{d}bc\bar{e}\bar{f}}
=R_{b\bar{d}a\bar{e}c\bar{f}}-R_{b\bar{d}c\bar{e}a\bar{f}}
+R_{a\bar{d}c\bar{e}b\bar{f}}-R_{a\bar{d}b\bar{e}c\bar{f}}\end{equation}
and we may similarly rearrange all tensors of type $(3,3)$ in 
$\begin{picture}(21,12)(0,3)
\put(1,1){\line(1,0){18}}
\put(1,7){\line(1,0){18}}
\put(1,13){\line(1,0){18}}
\put(1,1){\line(0,1){12}}
\put(7,1){\line(0,1){12}}
\put(13,1){\line(0,1){12}}
\put(19,1){\line(0,1){12}}
\end{picture}$ 
as a linear combination of tensors, solely of the form
$R_{a\bar{b}c\bar{d}e\bar{f}}$.  These tensors are in
$\bigodot^3\!\Wedge^{1,1}$ and it follows that the kernel of
(\ref{does_it_inject}) vanishes, as required.  The same reasoning evidently
extends to all~$k$.  Finally, it is clear that index manipulations such as
(\ref{index_manipulation}) remain valid if we suppose that the tensors in
question are trace-free with respect to $J^{a\bar{b}}$.
\end{proof}

\section{Poincar\'e series} Recall~(\ref{delong_poincare_series}), taken from
that Delong~\cite[\S6.2]{D}, that
$$\sum_{k\geq 0}\dim
\{\mbox{Killing tensors on ${\mathbb{CP}}_2$ of rank~$k$}\}\,t^k
\;=\frac{1+t+t^2}{(1-t)^7}.$$
For Killing tensors on the round $n$-sphere, Delong~\cite[\S3.4]{D} defines
$$G_n(t)\equiv\sum_{k\geq 0}\dim
\{\mbox{Killing tensors on $S^n$ of rank~$k$}\}\,t^k$$
and shows that
$$G_2(t)=\frac1{(1-t)^3},\enskip G_3(t)=\frac{1+t}{(1-t)^5},\enskip
G_4(t)=\frac{1+3t+t^2}{(1-t)^7},\enskip 
G_5(t)=\frac{1+6t+6t^2+t^3}{(1-t)^9},$$
and so on, where the coefficients in these numerators come from the following 
pyramid
$$\begin{array}{ccccccccccccc}
&&&&&&1\\[-3pt]
&&&&&1&&1\\[-3pt]
&&&&1&&3&&1\\[-3pt]
&&&1&&6&&6&&1\\[-3pt]
&&1&&10&&20&&10&&1\\[-3pt]
&1&&15&&50&&50&&15&&1\\[-3pt]
\raisebox{1pt}{.}\raisebox{4pt}{.}\raisebox{7pt}{.}&&
\raisebox{1pt}{.}\raisebox{4pt}{.}\raisebox{7pt}{.}&&
\raisebox{1pt}{.}\raisebox{4pt}{.}\raisebox{7pt}{.}&&
\vdots&&
\raisebox{7pt}{.}\raisebox{4pt}{.}\raisebox{1pt}{.}&&
\raisebox{7pt}{.}\raisebox{4pt}{.}\raisebox{1pt}{.}&&
\raisebox{7pt}{.}\raisebox{4pt}{.}\raisebox{1pt}{.}
\end{array}$$
sometimes known as the Catalan triangle~\cite{Catalan}.  Strictly speaking, the
rest of this article is conjectural, although the following power series
identities are easily checked (with aid from a computer) regarding the first
few thousand terms. Let us define
$$H_n(t)\equiv\sum_{k\geq 0}\dim
\{\mbox{Killing tensors on ${\mathbb{CP}}_n$ of rank~$k$}\}\,t^k.$$
Delong~\cite[\S6.2]{D} has found $H_2(t)=(1+t+t^2)/(1-t)^7$ and it turns out
that
$$H_3(t)=\frac{1+4t+10t^2+4t^3+t^4}{(1-t)^{11}},\enskip
H_4(t)=\frac{1+9t+45t^2+65t^3+45t^4+9t^5+t^6}{(1-t)^{15}},$$
and so on, where the coefficients in these numerators come from the following 
pyramid
$$\begin{array}{ccccccccccccc}
&&&&&&1\\[-3pt]
&&&&&1&1&1\\[-3pt]
&&&&1&4&10&4&1\\[-3pt]
&&&1&9&45&65&45&9&1\\[-3pt]
&&1&16&136&416&626&416&136&16&1\\[-3pt]
&1&25&325&1700&4550&6202&4550&1700&325&25&1\\[-3pt]
\raisebox{1pt}{.}\raisebox{4pt}{.}\raisebox{7pt}{.}&
\raisebox{1pt}{.}\raisebox{4pt}{.}\raisebox{7pt}{.}&
\raisebox{1pt}{.}\raisebox{4pt}{.}\raisebox{7pt}{.}&
\raisebox{1pt}{.}\raisebox{4pt}{.}\raisebox{7pt}{.}&
\raisebox{1pt}{.}\raisebox{4pt}{.}\raisebox{7pt}{.}&
\raisebox{1pt}{.}\raisebox{4pt}{.}\raisebox{7pt}{.}&
\vdots&
\raisebox{7pt}{.}\raisebox{4pt}{.}\raisebox{1pt}{.}&
\raisebox{7pt}{.}\raisebox{4pt}{.}\raisebox{1pt}{.}&
\raisebox{7pt}{.}\raisebox{4pt}{.}\raisebox{1pt}{.}&
\raisebox{7pt}{.}\raisebox{4pt}{.}\raisebox{1pt}{.}&
\raisebox{7pt}{.}\raisebox{4pt}{.}\raisebox{1pt}{.}&
\raisebox{7pt}{.}\raisebox{4pt}{.}\raisebox{1pt}{.}
\end{array}$$
A formula or algorithm for generating this pyramid directly remains mysterious.
However, it appears that there is an indirect formula as follows.  Let us set
$$L_n(x)\equiv\frac1{2^nn!}\frac{d^n}{dx^n}(x^2-1)^n,$$
the $n^{\mathrm{th}}$ Legendre polynomial:
$$\textstyle L_0(x)=1,\enskip L_1(x)=x,\enskip 
L_2(x)=\frac12(3x^2-1),\enskip
L_3(x)=\frac12(5x^3-3x),\enskip
\ldots$$
and, following Papoulis~\cite{P}, set
$$P_{2k+1}(w)\equiv\frac1{2(k+1)^2}\int_{-1}^{2w-1}\hspace{-15pt}\left[
L_0(x)+3L_1(x)+5L_2(x)+\cdots(2k+1)L_k(x)\right]^2dx,\enskip\mbox{for }k\geq0$$
so that $P_{2k+1}(w)$ is a polynomial of degree $2k+1$, the first few of which 
are
$$P_1(w)=w,\enskip P_3(w)=3w^3-3w^2+w,\enskip P_5(w)=20w^5-40w^4+28w^3-8w^2+w,
\enskip \ldots.$$
Then, it turns out that
$$H_n(t)=-\frac1{t(1-t)^{2n}}P_{2n-1}\Big(\frac{t}{t-1}\Big),\enskip\forall
n\geq 1.$$

\end{document}